\documentclass[11pt]{article}

\usepackage{mathtools}
\usepackage{amssymb}
\usepackage{amsthm}
\usepackage{xcolor}
\usepackage{tikz}
\usepackage{fullpage}

\DeclareSymbolFont{bbold}{U}{bbold}{m}{n}
\DeclareSymbolFontAlphabet{\mathbbold}{bbold}

\newcommand{\N}{\mathbb{N}}
\newcommand{\Z}{\mathbb{Z}}
\newcommand{\Q}{\mathbb{Q}}
\newcommand{\R}{\mathbb{R}}
\newcommand{\C}{\mathbb{C}}
\newcommand{\K}{\mathbb{K}}

\newcommand{\1}{\mathbbold{1}}

\newcommand{\loc}{\mathrm{loc}}
\newcommand{\lu}{\mathrm{unif}}
\newcommand{\unif}{\mathrm{unif}}
\newcommand{\Mloc}{\mathcal{M}_{\loc}(\R)}
\newcommand{\M}{\mathcal{M}_{\loc,\lu}(\R)}

\newcommand{\MR}{\mathcal{AM}(\R)}

\newcommand{\Dir}{{\rm D}}
\newcommand{\Neu}{{\rm N}}

\makeatletter
\newcommand{\uD}{\u@DN{\Dir}}
\newcommand{\uN}{\u@DN{\Neu}}
\newcommand{\u@DN}[1]{u_{#1\mkern-1mu}}
\makeatother

\DeclareMathOperator{\spt}{spt}
\DeclareMathOperator{\diam}{diam}

\renewcommand{\Re}{\operatorname{Re}}

\DeclareMathOperator{\tr}{tr}
\DeclareMathOperator{\Per}{Per}

\newcommand{\from}{\colon}

\let\phi\varphi
\let\le\leqslant
\let\leq\leqslant
\let\ge\geqslant
\let\geq\geqslant

\makeatletter
\def\@row#1,{#1\@ifnextchar;{\@gobble}{&\@row}}
\def\@matrix{%
    \expandafter\@row\my@arg,;%
    \@ifnextchar({\\ \get@in@paren{\@matrix}}{\after@matrix}%
    }
\def\matrixtype#1#2#3{%
    \ifmmode\def\after@matrix{\end{#2}\right#3}%
    \else\def\after@matrix{\end{#2}\right#3$}$\fi\iffalse$\fi
    \left#1\begin{#2}\get@in@paren{\@matrix}%
    }
\def\@column#1,{#1\@ifnextchar;{\@gobble}{\\ \@column}}
\newcommand\vect{}
\def\svect(#1){\left(\begin{smallmatrix}\@column#1,;\end{smallmatrix}\right)}
\def\vect{\get@in@paren{\@vect}}
\def\@vect{\left(\begin{matrix}\expandafter\@column\my@arg,;\end{matrix}\right)}
\def\get@in@paren#1({\def\my@arg{}\def\my@rest{}\def\after@get{#1}\get@arg}
\let\e@a\expandafter
\def\get@arg#1){\e@a\kl@test\my@rest#1(;}
\def\kl@test#1(#2;{\e@a\def\e@a\my@arg\e@a{\my@arg#1}%
                   \ifx:#2:\let\my@exec\after@get
                   \else\let\my@exec\get@arg
                        \e@a\def\e@a\my@arg\e@a{\my@arg(}%
                        \def@rest#2;%
                   \fi\my@exec}
\def\def@rest#1(;{\def\my@rest{#1\kl@zu}}
\def\kl@zu{)}

\makeatletter
\newcommand\MyPairedDelimiter{%
  \@ifstar{\My@Paired@Delimiter{{}}}
          {\My@Paired@Delimiter{}}%
}
\newcommand\My@Paired@Delimiter[4]{%
  \newcommand#2{%
    \@ifstar{\start@PD{#1}{\delimitershortfall=-1sp}{#3}{#4}}
            {\start@PD{#1}{}{#3}{#4}}%
  }%
}
\newcommand\start@PD[5]{%
  #1\mathopen{\mathpalette\put@delim@helper{\put@delim{#2}{#3}{.}{#5}}}%
  #5%
  \mathclose{\mathpalette\put@delim@helper{\put@delim{#2}{.}{#4}{#5}}}%
}
\newcommand\put@delim@helper[2]{%
  \hbox{$\m@th\nulldelimiterspace=0pt #2#1$}%
}
\newcommand\put@delim[5]{%
  \setbox\z@\hbox{$\m@th#5{#4}$}%
  \setbox\tw@\null
  \ht\tw@\ht\z@ \dp\tw@\dp\z@
  #1#5%
  \left#2\box\tw@\right#3%
}

\makeatother
\MyPairedDelimiter*{\abs}{\lvert}{\rvert}
\MyPairedDelimiter*{\norm}{\lVert}{\rVert}
\MyPairedDelimiter{\set}{\{}{\}}

\newcommand\llim{
\mathchoice{\vcenter{\hbox{${\scriptstyle{-}}$}}}
{\vcenter{\hbox{$\scriptstyle{-}$}}}
{\vcenter{\hbox{$\scriptscriptstyle{-}$}}}
{\vcenter{\hbox{$\scriptscriptstyle{-}$}}}}

\theoremstyle{plain} % default
\newtheorem{theorem}{Theorem}[section]
\newtheorem{corollary}[theorem]{Corollary}
\newtheorem{lemma}[theorem]{Lemma}
\newtheorem{proposition}[theorem]{Proposition}

\theoremstyle{definition}
\newtheorem{example}[theorem]{Example}
\newtheorem*{definition}{Definition}
\newtheorem{remark}[theorem]{Remark}

\usepackage{enumitem}

\setenumerate[1]{nolistsep} % Only the level 1
\setenumerate[2]{nolistsep} % Only the level 2

\setcounter{secnumdepth}{1}
\setcounter{tocdepth}{1}

\newcommand{\Hmm}[1]{\leavevmode{\marginpar{\tiny%
$\hbox to 0mm{\hspace*{-0.5mm}$\leftarrow$\hss}%
\vcenter{\vrule depth 0.1mm height 0.1mm width \the\marginparwidth}%
\hbox to 0mm{\hss$\rightarrow$\hspace*{-0.5mm}}$\\\relax\raggedright #1}}}

\begin{document}

\medmuskip=4mu plus 2mu minus 3mu
\thickmuskip=5mu plus 3mu minus 1mu
\belowdisplayshortskip=9pt plus 3pt minus 5pt

\title{On eigenvalue bounds for a general class of Sturm-Liouville operators}

%\author[CS]{Christian Seifert}

\author{Christian Seifert}
%\address[CS]{
% Ludwig-Maximilians-Universit\"at M\"unchen,
% Mathematisches Institut,
% Theresienstra{\ss}e 39,
% 80333 M\"unchen, Germany
%}
%\ead{
%christian.seifert@mathematik.uni-muenchen.de
%}

\date{\today}

\maketitle

\begin{abstract}
  \noindent
  We consider Sturm-Liouville operators with measure-valued weight and potential, and positive, bounded diffusion coefficient which is bounded away from zero.
  By means of a local periodicity condition, which can be seen as a quantitative Gordon condition, 
  we prove a bound on eigenvalues for the corresponding operator in $L_p$, for $1\leq p<\infty$. 
  We also explain the sharpness of our quantitative bound, and provide an example for quasiperiodic operators.
%\end{abstract}

%\begin{keyword}
  Keywords:
  Jacobi operators,% \sep 
  Sturm-Liouville operators,% \sep 
  eigenvalue problem,% \sep 
  quasiperiodic operators,% \sep 
  transfer matrices
  
  %\MSC[2010] 
  MSC 2010:
  34L15,% \sep 
  34L40,% \sep 
  81Q12
%\end{keyword}
\end{abstract}

\section{Introduction}
\label{sec:Introduction}

In this paper we study bounds on (and absence of) eigenvalues for (elliptic) Sturm-Liouville operators $H:=H_{p,\rho,a,\mu}$ in $L_p(\R,\rho)$ acting on $u$ as
\[Hu := \partial_\rho\Bigl( -au' + \int_0^{(\cdot)} u\,d\mu\Bigr).\]
Here, $p\in[1,\infty)$, $\rho$ is a non-negative locally finite periodic measure, 
$0\leq a\in L_\infty(\R)$ with $\frac{1}{a}\in L_\infty(\R)$ and $\mu$ is a real uniformly locally finite measure.
Such operators include classical Sturm-Liouville operators, continuum Schr\"odinger operators with (local) measures as potential, discrete Schr\"odinger operators and Jacobi matrices, 
providing a unified framework.

For fixed $p$ and $\rho$, we show quantitatively that $H$ does not have eigenvalues with small modulus,
provided for a sequence $(p_m)$ of periods tending to infinity the coefficents $a$ and $\mu$ restricted to $[-p_m,0]$, $[0,p_m]$ and $[p_m,2p_m]$ look very similar.
Such a condition is sometimes called Gordon-codition due to \cite{Gordon1976}, see also \cite{Gordon1986,Damanik2000,DamanikStolz2000,Damanik2004,Seifert2011,Seifert2012,SeifertVogt2014,Fillman2015},
for various situations. Note that in these references, almost exclusively the case of Schr\"odinger operators are treated (except for \cite{Fillman2015}, where CMV-matrices are considered),
and except of \cite{SeifertVogt2014} all results are qualitative.

The quantitative bound we provide is in general not sharp. 
However, we can derive a sharp bound by minor modifications (see also Section 6 of our previous treatment \cite{SeifertVogt2014} for details). 
Thus, this paper can be seen as a generalization of \cite{SeifertVogt2014}.

Our results can be applied to quasiperiodic coefficients where the ratio of the periods can be well-approximated by rational numbers (a so-called strong Liouville condition).
Such an assumption is typical in the treatment of one-dimensional quasicrystal models.

The paper is organised as follows. 
In Section \ref{sec:Sturm-Liouville_operators} we introduce the Sturm-Liouville operators we are dealing with and show how the special cases mentioned above can be derived.
Section \ref{sec:Solutions} deals with solutions of the eigenvalue equation, and provides some first estimates of solutions in terms of the coefficients. 
Here, we work with $L_\infty$-estimates for $\frac{1}{a}$ and a uniform local norm for $\mu$.
The following Section \ref{sec:differences_solutions} focusses on estimating differences of solutions in terms of differences of the coefficients, 
which is measured in $L_1$ for the diffusion coefficent and in a Wasserstein-type seminorm for the potential.
In the final Section \ref{sec:Gordons_Theorem} we state the precise condition for absence of eigenvalues, state and prove the eigenvalue bound, comment on the sharpness of it and provide the example.
In an appendix we include a Gronwall inequality suitable for our purpose.

\section{Sturm-Liouville operators with measure-valued coefficients}
\label{sec:Sturm-Liouville_operators}

Let $\K\in\set{\R,\C}$.
Let $\mathcal{B}(\R)$ denote the Borel $\sigma$-field on $\R$.
A mapping $\mu\from \set{B\in\mathcal{B}(\R);\; B\,\text{bounded}}\to \K$ is called a \emph{local measure} if
$\1_K\mu:=\mu(\cdot\cap K)$ is a (finite) $\K$-valued Radon measure for all compact subsets $K\subseteq \R$.
Then there exist a (unique) nonnegative Radon measure
$\nu$ on $\R$ and a measurable function $\sigma\from\R\to \K$ such that $\abs{\sigma}=1$ $\nu$-a.e.\ and
$\1_K\mu = \1_K \sigma\nu$ for all compact sets $K\subseteq \R$. The total variation of $\mu$ is defined
by $\abs{\mu}:=\nu$.
Let $\Mloc$ be the space of all local measures on $\R$.

A local measure $\mu\in \Mloc$ is called \emph{uniformly locally bounded} if
\[
  \norm{\mu}_\lu := \sup_{t\in\R} \abs{\mu}\bigl((t,t+1]\bigr) < \infty.
\]
Let $\M$ denote the space of all uniformly locally bounded local measures.
The space $\M$ naturally extends $L_{1,\loc,\unif}(\R)$ to measures.

\begin{remark}
  Let $\mu\in\Mloc$. Then the set $\set{t\in\R;\; \mu(\set{t}) \neq 0}$ of atoms of $\mu$ is at most countable.
\end{remark}

We say that $f\from\R\to \K$ is \emph{locally absolutely continuous} with respect to $\mu\in \Mloc$ if there exists $h\in L_{1,\loc}(\R,\abs{\mu})$ such that
\[f(t) = f(c) + \int_c^t h(s)\,d\mu(s) \quad(t\in \R),\]
for some $c\in \R$, where
\[
  \int_s^t \dots \, d\mu := \begin{cases}
                         \int_{(s,t]} \dots \, d\mu & \text{ if } t\ge s,\\
                        -\int_{(t,s]} \dots \, d\mu & \text{ if } t<s.
                       \end{cases}
\]
Then $h$ is the Radon-Nikodym derivative of $f$ with respect to $\mu$, which is uniquely defined in $L_{1,\loc}(\R,\abs{\mu})$. We will write
$\partial_\mu f := h$.
Furthermore, $f$ is then right-continuous and locally of bounded variation, so also the limits from the left exist everywhere.

\begin{remark}[jump heights]
  Let  $\mu\in \M$, $f\from \R\to \K$ be measurable. Assume $\partial_\rho f\in L_{1,\loc}(\R,\rho)$. Then
  \[f(t) = f(t\llim) + \partial_\rho f(t)\rho(\set{t}) \quad(t\in\R).\]
\end{remark}

\medskip

Let $0\leq \rho \in \M$, $\rho\neq 0$. Let
\[\Per(\rho):=\set{p\in\R\setminus\{0\};\; \rho(\cdot+p) = \rho}\]
be the set of periods of $\rho$. Note that $\rho$ is periodic if and only if $\Per(\rho)$ is an infinite set if and only if $\Per(\rho)\neq \varnothing$. 
Clearly, then the support $\spt\rho$ of $\rho$ is an infinite set.

Let $a\from\R\to\K$ be measurable and right-continuous, $\mu\in \M$. For $u\in W_{1,\loc}^1(\R)$ define $A_{a,\mu}u$ by
\[A_{a,\mu}u(t):= -(au')(t) + \int_0^t u(s)\,d\mu(s)\]
for a.a.\ $t\in \R$. Note that $A_{a,\mu}\in L_{1,\loc}(\R)$.
Define 
\begin{align*}
  D:=D_{\rho,a,\mu}(\R) & := \bigl\{u\in W_{1,\loc}^1(\R);\; A_{a,\mu}u\, \text{locally absolutely continuous w.r.t.\ $\rho$}\bigr\}.
\end{align*}
Note that for $u\in D$ also $au'$ is right-continuous and locally of bounded variation.

\bigskip

For the rest of that paper, let $0\leq \rho \in \M$, $\rho\neq 0$ be periodic, and write
\[\MR:=\set{(a,\mu);\; a\from\R\to [0,\infty),\, a,\tfrac{1}{a}\in L_\infty(\R),\; \mu\in\M\,\text{real}, \spt\mu\subseteq\spt\rho}.\]

\medskip

Let $p\in[1,\infty)$. For $(a,\mu)\in \MR$ we define the operator $H :=H_{p,\rho,a,\mu}$ in $L_p(\R,\rho)$ by
\begin{align*}
  D(H) & := \set{u\in L_p(\R,\rho);\; u\in D,\, \partial_\rho A_{a,\mu} u\in L_p(\R,\rho)},\\
  H u & := \partial_\rho A_{a,\mu} u.
\end{align*}
Note that by the reasoning in \cite[Sections 3 and 4]{EckhardtTeschl2013}, $H$ is indeed a densely defined operator in $L_p(\R,\rho)$. 

\begin{example}
  Let $r\in L_{1,\loc}(\R)$, $r>0$ a.e., $\rho := r\lambda$, where $\lambda$ is the Lebesgue measure on $\R$, $0\leq a\in L_{\infty}(\R)$ such that $\frac{1}{a}\in L_{\infty}(\R)$, 
  $q\in L_{1,\loc}(\R)$ real, $\mu:= q\lambda$. Then $(a,\mu)\in \MR$, and $H$ acts as
  \[Hu = \frac{1}{r}(-(au')' + qu),\]
  i.e.\ as a classical Sturm-Liouville operator.
\end{example}

\begin{example}
  Let $\rho := \lambda$, where $\lambda$ is the Lebesgue measure on $\R$, $a:=1$ , 
  $\mu\in\M$ real. Then $(a,\mu)\in \MR$, and $H$ acts as
  \[Hu = -u''+ u\mu,\]
  i.e.\ as a one-dimensional continuum Schr\"odinger operator with a local measure as potential.
\end{example}

\begin{example}
  Let $\rho := \delta_\Z:= \sum_{n\in\Z}\delta_n$, $(a_n)_{n\in\Z}$ in $(0,\infty)$ be bounded such that $(\frac{1}{a_n})$ is also bounded
  $a:=\sum_{n\in\Z} a_n \1_{[n,n+1)}$, $(b_n)_{n\in\Z}$ in $\R$, $\mu:=\sum_{n\in\Z} b_n\delta_n$. Then $(a,\mu)\in \MR$, and $H$ acts as
  \[Hu(n) = a_{n-1}\bigl(u(n)-u(n-1)\bigr) - a_n\bigl(u(n+1)-u(n)\bigr) + b_nu(n) \quad(n\in\Z),\]
  i.e.\ as a Jacobi operator.
\end{example}

\section{Solutions of the eigenvalue equation}
\label{sec:Solutions}

\begin{definition}
  Let $(a,\mu)\in\MR$, $z\in\C$. We say that $u\from \R\to\K$ is a \emph{solution} of 
  \[Hu = zu,\]
  if $u\in D$ and $\partial_\rho A_{a,\mu} u = zu$ in $L_{1,\loc}(\R,\rho)$.
\end{definition}
By \cite[Theorem 3.1]{EckhardtTeschl2013}, solutions exist and are uniquely defined by the values of $u$ and $au'$ at the same point $t\in\R$ (put differently, the space of solutions is two-dimensional).
Note that $u$ is a solution of $H_{p,\rho,a,\mu}u=zu$ if and only if $u$ is a solution of $H_{p,\rho,a,\mu-z\rho} u = 0$.
Furthermore, for real $\rho$, $a$, $\mu$ and $z$ also solutions $u$ of $Hu = zu$ can be chosen to be real.

\begin{remark}
  Let $(a,\mu)\in\MR$, $z\in\C$ and $u$ be a solution of $Hu=zu$. Then $au'$ is constant on every connected component of $\R\setminus \spt\rho$. 
  Indeed, $u$ satisfies, for some $c\in\R$,
  \[z \int_c^t u\,d\rho = A_{a,\mu}u(t) - A_{a,\mu}u(c) = -(au')(t) + (au')(c) + \int_c^t u\,d\mu \quad(t\in\R).\]
\end{remark}

\begin{definition}
  Let $(a,\mu)\in \MR$. For $s\in\R$ let $\uN(\cdot;s)$, $\uD(\cdot;s)$ are the solutions of $Hu = 0$ satisfying
  \begin{align*}
    \uN(s;s) & = 1 & \uD(s;s) & = 0 & \\
    (a\uN'(\cdot;s))(s) & = 0 & \bigl(a\uD'(\cdot;s)\bigr)(s) & = 1. & \\
  \end{align*}
  Then $\uN(\cdot;s)$ and $\uD(\cdot;s)$ are called \emph{Neumann} and \emph{Dirichlet} solution (with initial condition at $s$), respectively.
  For $s,t\in\R$ we denote by
  \[T_{a,\mu}(t,s) := \begin{pmatrix}
		\uN(t;s) & \uD(t;s)\\
		\bigl(a\uN'(\cdot;s)\bigr)(t) & \bigl(a\uD'(\cdot;s)\bigr)(t)
		\end{pmatrix}\]
  the \emph{transfer matrices} for the equation $Hu=0$.
\end{definition}

\begin{lemma}
  Let $(a,\mu)\in \MR$, $u\in D$.
  The following are equivalent:
  \begin{enumerate}
    \item
      $u$ is a solution of the equation $Hu = 0$.
    \item
      For $s,t\in\R$ we have
      \[\begin{pmatrix}
	  u(t) \\
	  (au')(t)
	\end{pmatrix} = T_{a,\mu}(t,s)\begin{pmatrix}
	  u(s) \\
	  (au')(s)
	\end{pmatrix}.\]
  \end{enumerate}
\end{lemma}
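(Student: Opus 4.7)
The plan is to exploit the linear structure of the solution space of $Hu = 0$ together with the two-dimensionality and uniqueness statement cited from \cite[Theorem 3.1]{EckhardtTeschl2013}, which guarantees that a solution is uniquely determined by the values of $u$ and $au'$ at a single point.

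For the direction (b) $\Rightarrow$ (a), I would simply read off from the matrix equation that
\[u(t) = u(s)\,\uN(t;s) + (au')(s)\,\uD(t;s)\qquad(t\in\R),\]
with the coefficients $u(s)$ and $(au')(s)$ independent of $t$. Since $\uN(\cdot;s)$ and $\uD(\cdot;s)$ are by definition solutions of $Hu=0$, and the solution set is a linear space, any such linear combination is again a solution; hence $u$ solves $Hu=0$.

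For the direction (a) $\Rightarrow$ (b), fix $s\in\R$ and define
\[v(t):=u(s)\,\uN(t;s)+(au')(s)\,\uD(t;s)\qquad(t\in\R).\]
Then $v$ is a solution of $Hv=0$ by linearity, and from the defining initial conditions of $\uN(\cdot;s)$ and $\uD(\cdot;s)$ one checks $v(s)=u(s)$ and $(av')(s)=(au')(s)$. Since $u$ is also a solution of $Hu=0$ with the same initial data at $s$, the uniqueness part of \cite[Theorem 3.1]{EckhardtTeschl2013} yields $v=u$, and differentiating the identity $v=u$ in the $a\,\cdot'$-sense gives $(au')(t)=u(s)(a\uN'(\cdot;s))(t)+(au')(s)(a\uD'(\cdot;s))(t)$. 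The two identities together are exactly the matrix relation.

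I do not anticipate a serious obstacle: everything is either definitional (the initial conditions that make the entries of $T_{a,\mu}(t,s)$ agree with the components of $(u,au')(s)$) or a direct appeal to existence and uniqueness of solutions together with linearity of the eigenvalue equation. The only point requiring care is verifying that $(av')(s)$ really equals $(au')(s)$ when $s$ is an atom of $\rho$ or $\mu$, but this follows from the definition of the initial conditions at the exact point $s$ rather than at one-sided limits, so there is no ambiguity.
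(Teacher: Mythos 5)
Your proof is correct and follows essentially the same route as the paper: both directions rest on the defining initial conditions of $\uN(\cdot;s)$ and $\uD(\cdot;s)$, the linearity of the equation, and the existence/uniqueness statement from \cite[Theorem 3.1]{EckhardtTeschl2013}. The paper merely phrases (a)$\Rightarrow$(b) via the matrix representation of the solution-shifting map and checks (b)$\Rightarrow$(a) by explicitly computing $-(au')(t)+\int_s^t u\,d\mu$ and differentiating with respect to $\rho$, which is the same content as your uniqueness and linearity arguments.
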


\begin{proof}
  ``(a)$\Rightarrow$(b)'': Fix $s,t\in\R$ and let
      \[\tilde{T}_{a,\mu}(t,s)\from \begin{pmatrix}
	  u(s) \\
	  (au')(s)
	\end{pmatrix}\mapsto \begin{pmatrix}
	  u(t) \\
	  (au')(t)
	\end{pmatrix},\]
	i.e.\ the mapping which shifts solutions (of the corresponding first order system) at $s$ to solutions at $t$.
	Then $\tilde{T}_{a,\mu}(t,s)$ is linear and can be represented by a matrix, which we will also denote by $\tilde{T}_{a,\mu}(t,s)$.
	By the initial conditions for the Neumann and Dirichlet solution we observe
	\begin{align*}
	\tilde{T}_{a,\mu}(t,s) & = \tilde{T}_{a,\mu}(t,s)\begin{pmatrix} 1 & 0\\0 & 1\end{pmatrix} = \tilde{T}_{a,\mu}(t,s) \begin{pmatrix}
		    \uN(s;s) & \uD(s;s)\\
		    \bigl(a\uN'(\cdot;s)\bigr)(s) & \bigl(a\uD'(\cdot;s)\bigr)(s)
		    \end{pmatrix}\\
		  & = \begin{pmatrix}
		    \uN(t;s) & \uD(t;s)\\
		    \bigl(a\uN'(\cdot;s)\bigr)(t) & \bigl(a\uD'(\cdot;s)\bigr)(t)
		    \end{pmatrix} = T_{a,\mu}(t,s).
	\end{align*}
  ``(b)$\Rightarrow$(a)'': Fix $s\in\R$. For $t\in\R$ we have
  \begin{align*}
    u(t) & = \uN(t;s)\cdot u(s) + \uD(t;s)\cdot (au')(s),\\
    (au')(t) & =  \bigl(a\uN'(\cdot;s)\bigr)(t)\cdot u(s) + \bigl(a\uD'(\cdot;s)\bigr)(t)\cdot (au')(s).
  \end{align*}
  Thus,
  \begin{align*}
    -(au')(t) + \int_s^t u(r)\,\mu(r) & = u(s) \bigg(- \bigl(a\uN'(\cdot;s)\bigr)(t) + \int_s^t \uN(r;s)\,d\mu(r)\bigg) \\
    & \quad + (au')(s)\bigg( -\bigl(a\uD'(\cdot;s)\bigr)(t) + \int_s^t \uD(r;s)\,d\mu(r)\bigg). 
  \end{align*}
  Differentiating with respect to $\rho$ yields
  \[Hu = u(s) H\uN(\cdot;s) + (au')(s) H\uD(\cdot;s) = u(s)\cdot 0 + (au')(s)\cdot 0 = 0.\]
  Hence, $u$ is a solution of $H u = 0$.	
\end{proof}

\begin{lemma}
  Let $(a,\mu)\in \MR$. Then $\det T_{a,\mu}(t,s) = 1$ for all $s,t\in\R$, and
  \[T_{a,\mu}(s,t) = T_{a,\mu}(t,s)^{-1} = \begin{pmatrix}
							  \bigl(a\uD(\cdot;s)'\bigr)(t) & -\uD(t;s)\\
							  -\bigl(a\uD(\cdot;s)'\bigr)(t) & \uN(t;s)
                                                         \end{pmatrix} \quad(s,t\in\R).\]
\end{lemma}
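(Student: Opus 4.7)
My plan is to establish the determinant identity by showing that the Wronskian
\[W(t) := \uN(t;s)\,\bigl(a\uD'(\cdot;s)\bigr)(t) - \uD(t;s)\,\bigl(a\uN'(\cdot;s)\bigr)(t) = \det T_{a,\mu}(t,s)\]
is constant in $t$ with $W(s)=1$, and then to deduce the inverse formula from the transfer-matrix characterisation proved in the previous lemma together with the $2\times 2$ cofactor identity. The initial value is immediate: substituting the prescribed data at $t=s$ gives $W(s)=1\cdot 1 - 0\cdot 0 = 1$.

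For constancy, abbreviate $v_1 = \uN(\cdot;s)$, $v_2 = \uD(\cdot;s)$ and $p_i = a v_i'$. Since both $v_i$ solve $Hu = 0$, the function $A_{a,\mu}v_i = -p_i + \int_0^{(\cdot)} v_i\,d\mu$ is $\rho$-locally absolutely continuous with vanishing $\rho$-derivative, so it is constant; consequently, as Stieltjes measures,
\[dp_i = v_i\,d\mu.\]
The $v_i$ lie in $W^1_{1,\loc}(\R)$ and are therefore \emph{continuous}, while $p_i$ are right-continuous and locally of bounded variation. Applying the Stieltjes product rule to $v_1 p_2$ and $v_2 p_1$, the continuity of $v_i$ eliminates any atomic correction term, so I would obtain
\begin{align*}
W(t) - W(s) &= \int_s^t v_1\,dp_2 - \int_s^t v_2\,dp_1 + \int_s^t p_2\,dv_1 - \int_s^t p_1\,dv_2\\
&= \int_s^t (v_1 v_2 - v_2 v_1)\,d\mu + \int_s^t (p_2 v_1' - p_1 v_2')\,d\lambda = 0,
\end{align*}
where the first integrand vanishes pointwise and the second integrand equals $a(v_2' v_1' - v_1' v_2') = 0$ $\lambda$-a.e. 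Hence $W \equiv 1$, proving $\det T_{a,\mu}(t,s) = 1$.

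For the second identity, by the previous lemma $T_{a,\mu}(r,q)$ is, for any $q,r\in\R$, the unique linear map sending $\bigl(u(q),(au')(q)\bigr)^{\!\top}$ to $\bigl(u(r),(au')(r)\bigr)^{\!\top}$ for every solution $u$ of $Hu = 0$. Applied with $(q,r) = (s,t)$ and then $(q,r) = (t,s)$, the composition $T_{a,\mu}(s,t)\,T_{a,\mu}(t,s)$ fixes every initial datum at $s$; since $\uN(\cdot;s),\uD(\cdot;s)$ produce a basis of such data, the composition equals the identity, so $T_{a,\mu}(s,t) = T_{a,\mu}(t,s)^{-1}$. The explicit formula is then the standard $2\times 2$ cofactor expression specialised to determinant $1$.

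The main obstacle is Step~2: justifying the Stieltjes product rule cleanly in this measure-valued framework. The subtle point is that $p_i$ may have jumps at the atoms of $\mu$ (and of $\rho$), so in the asymmetric product rule one normally picks up terms of the form $\Delta v_i \cdot \Delta p_j$; these disappear precisely because $v_i$ is continuous, not merely right-continuous. Once the product rule is laid down correctly, the cancellations are purely algebraic.
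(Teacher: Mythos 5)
Your proof is correct, but it takes a more self-contained route than the paper. The paper's own proof is two lines: it cites the Lagrange identity of \cite[Proposition 3.2]{EckhardtTeschl2013} to conclude that the determinant of the transfer matrix is constant in $t$ (hence equal to its value $1$ at $t=s$), and then notes that the inverse formula is an immediate consequence (the cofactor formula for a $2\times 2$ matrix of determinant $1$). You instead re-prove that Lagrange identity from scratch: from $Hv_i=0$ you extract $d(av_i')=v_i\,d\mu$ as Stieltjes measures and run the measure-theoretic product rule on the Wronskian, with the continuity of $v_i\in W^1_{1,\loc}(\R)$ correctly invoked to kill the jump cross-terms $\Delta v_i\,\Delta p_j$ coming from the atoms of $\mu$; the remaining integrands cancel algebraically. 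This buys independence from the cited reference at the price of handling exactly the delicate point you flag, and your handling of it is sound. Your derivation of $T_{a,\mu}(s,t)=T_{a,\mu}(t,s)^{-1}$ by composing the two transfer maps on a basis of initial data at $s$ is also fine, and is just a more explicit version of the paper's ``immediate consequence.'' One small remark: the explicit matrix displayed in the statement contains a typo in its $(2,1)$ entry, which should read $-\bigl(a\uN'(\cdot;s)\bigr)(t)$ rather than $-\bigl(a\uD'(\cdot;s)\bigr)(t)$; your appeal to the standard cofactor expression yields the corrected form, so no adjustment to your argument is needed.
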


\begin{proof}
  By the Lagrange-identity, see \cite[Proposition 3.2]{EckhardtTeschl2013}, the determinant of the transfer matrices is constant. Thus, it equals $1$.
  The formula for the inverse matrix is then an immediate consequence.
\end{proof}

\begin{lemma}
\label{lem:conv_to_0}
  Let $u\from \R\to\K$ be measurable, right-continuous, $u\in L_p(\R,\rho)$. Assume that for all $r>0$ we have
  \[\abs{u(t+r)-u(t)}\to 0 \quad(\abs{t}\to \infty).\]
  Then $u(t)\to 0$ as $\abs{t}\to\infty$.
\end{lemma}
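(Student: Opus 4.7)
My plan is to argue by contradiction: suppose there exist $\epsilon>0$ and $|t_n|\to\infty$ with $|u(t_n)|\geq\epsilon$. WLOG take $t_n\to+\infty$, fix a period $p>0$ of $\rho$, and set $c:=\rho((0,p])>0$ (positive because $\rho\neq 0$ is $p$-periodic). The first step is a Fatou-type lower bound: for each $r\in(0,p]$ the hypothesis gives $\liminf_n |u(t_n+r)|\geq\epsilon$, and since $\rho$ is finite on $(0,p]$, dominated convergence applied to the indicators $\mathbf{1}_{\{r:|u(t_n+r)|<\epsilon/2\}}$ (dominated by $\mathbf{1}_{(0,p]}$) yields $\rho(\{r\in(0,p]:|u(t_n+r)|<\epsilon/2\})\to 0$; hence
\[
\int_{(0,p]} |u(t_n+r)|^p\,d\rho(r)\geq (\epsilon/2)^p c + o(1)\quad(n\to\infty).
\]

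The second step is to rewrite this integral via periodicity so it can be bounded using $u\in L_p(\R,\rho)$. Writing $t_n=k_np+s_n$ with $k_n\in\Z$ and $s_n\in[0,p)$, periodicity of $\rho$ gives
\[
\int_{(0,p]} |u(t_n+r)|^p\,d\rho(r)=\int_{(k_np,(k_n+1)p]} |u(s_n+r)|^p\,d\rho(r),
\]
and the elementary inequality $|a|^p\leq 2^{p-1}(|b|^p+|a-b|^p)$ applied with $a=u(s_n+r)$, $b=u(r)$ bounds the right-hand side by a constant times
\[
\int_{(k_np,(k_n+1)p]} |u(r)|^p\,d\rho(r) + \int_{(k_np,(k_n+1)p]} |u(s_n+r)-u(r)|^p\,d\rho(r).
\]
The first summand vanishes as $|k_n|\to\infty$ by $u\in L_p(\R,\rho)$.

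The third step, the technical crux, controls the second summand. Passing to a subsequence so that $s_n\to s^*\in[0,p]$, I split $u(s_n+r)-u(r)=[u(s_n+r)-u(s^*+r)]+[u(s^*+r)-u(r)]$. The second bracket is bounded uniformly in $r$ on the window by $\eta$ for $n$ large, by applying the hypothesis at the \emph{fixed} shift $s^*$ (once $k_np$ exceeds the threshold guaranteed by the hypothesis at this single $r'=s^*$), so its contribution is at most $\eta^p c$, and $\eta\to 0$ finishes that piece. The remaining bracket $|u(s_n+r)-u(s^*+r)|$ involves the \emph{varying} shift $s_n-s^*\to 0$ and is the main obstacle, since the pointwise-in-$r$ hypothesis does not apply verbatim. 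To handle it I extract a further subsequence along which either $s_n=s^*$ eventually (so the bracket vanishes identically) or $s_n$ converges to $s^*$ monotonically; in the latter case the right-continuity of $u$, combined with the $p$-periodicity of $\rho$ and an Egorov/dominated-convergence argument on the finite measure space $((0,p],\rho)$ applied after shifting the integration back by $k_np$, controls the $\rho$-measure of the set where the bracket exceeds $\eta$. Combining everything yields $\int_{(0,p]}|u(t_n+r)|^p\,d\rho(r)\to 0$, contradicting the lower bound in the first step. The hardest step is this last one: upgrading the pointwise-in-$r$ hypothesis to sufficient uniform control on shifts of vanishing size is precisely where the right-continuity of $u$ enters essentially.
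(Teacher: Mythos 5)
Your Steps 1 and 2 are fine: the dominated-convergence lower bound $\int_{(0,p]}\lvert u(t_n+r)\rvert^p\,d\rho(r)\geq(\epsilon/2)^p c+o(1)$ is correct, and the identity rewriting this integral over the lattice window $(k_np,(k_n+1)p]$ uses only translation of $\rho$ by the lattice amount $k_np$, which periodicity does give. The genuine gap is exactly at the step you call the crux. After shifting back by $k_np$, the term you must kill is $\int_{(0,p]}\lvert u(k_np+s_n+r')-u(k_np+s^*+r')\rvert^p\,d\rho(r')$, whose arguments tend to infinity with $n$. There is no pointwise-in-$r'$ convergence statement here to which Egorov or dominated convergence could be applied: right-continuity of $u$ at a \emph{fixed} point controls $u(x+h)-u(x)$ only for $h$ below a modulus depending on that point, and here the base points $k_np+s^*+r'$ move off to infinity, so the modulus may degenerate arbitrarily fast compared with $s_n-s^*$ (monotonicity of $s_n$ does not help). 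Nor can you fall back on the hypothesis itself: it is assumed for each \emph{fixed} shift $r>0$ with a threshold in $t$ that may blow up as $r\downarrow 0$, so it says nothing about the vanishing shifts $s_n-s^*$ evaluated at times of order $k_np$, which are tied to the same index $n$. A right-continuous function can in principle oscillate on ever finer scales as $t\to\infty$, so this step cannot be repaired by regularity of $u$ alone; the window-based $L_p$ comparison forces you to compare $u$ at two nearby but $n$-dependent shifts, and that is precisely what neither hypothesis controls.

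The paper's proof avoids this difficulty by never comparing two moving windows: it fixes one shift $r$ once and for all and gets the contradiction directly from the difference hypothesis at that $r$. Concretely, after normalizing $\rho((0,s])=1$ and passing to a subsequence with $\lVert \1_{(t_n,t_n+s]}u\rVert_{L_p(\R,\rho)}\leq 2^{-2n}$, Markov's inequality bounds the measure of the sets where $u(t_n+\cdot)\geq 2^{-n}$ on $(0,s]$ by $2^{-np}$; since these measures are summable with total mass less than $1$, some single $r\in(0,s]$ lies outside all of them, whence $u(t_n+r)<2^{-n}$ while $u(t_n)\geq\delta$, contradicting $\lvert u(t_n+r)-u(t_n)\rvert\to 0$ for that fixed $r$. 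If you want to keep your Step 1, you should redirect it toward producing such a single fixed $r$ (a Borel--Cantelli-type selection) rather than toward an $L_p$ bound over a shifted window; as it stands, your Step 3 is not a proof.
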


\begin{proof}
  Let $s\in \Per(\rho)$. Without loss of generality we may assume that $\rho((0,s]) = 1$ and $u\geq0$ (thanks to the reverse triangle inequality).
  
  Assume that $u(t)\not\to 0$ as $t\to\infty$. Then there exists $\delta>0$ and $(t_n)$ in $(0,\infty)$ such that
  $t_n\to\infty$ and $u(t_n)\geq \delta$ for all $n\in\N$.
  Since $u\in L_p(\R,\rho)$ we have $\norm{\1_{(t_n,t_n+s]}u}_{L_p(\R,\rho)} \to 0$. Passing to a subsequence we may assume that
  \[\norm{\1_{(t_n,t_n+s]}u}_{L_p(\R,\rho)} \leq 2^{-2n} \quad(n\in\N).\]
  By Markov's inequality, we observe
  \[\rho(\set{t\in(t_n,t_n+s];\; u(t)\geq 2^{-n}}) \leq \frac{\norm{\1_{(t_n,t_n+p]}u}_{L_p(\R,\rho)}^p}{2^{-np}} \leq 2^{-np}.\]
  Let $A_n:=\set{t\in(0,s];\; u(t_n+t)\geq 2^{-n}}$. Then $\rho(A_n) \leq 2^{-np}\leq 2^{-n}$, and therefore
  \[\rho(\cup_{n\geq 3} A_n) \leq \sum_{n\geq 3} \rho(A_n) \leq 2^{-2}<1.\]
  Hence, $G:=(0,s]\setminus (\cup_{n\geq 3} A_n)$ has positive $\rho$-measure and is therefore non-empty. Let $r\in G$. Then $u(t_n+r) < 2^{-n}$ for $n\geq 3$, and therefore
  \[\liminf_{n\to\infty} \abs{u(t_n+r)-u(t_n)}\geq \delta > 0. \qedhere\]  
\end{proof}

\begin{lemma}
\label{lem:tends_to_zero}
  Let $(a,\mu)\in \MR$, $u\in L_p(\R,\rho)$ a solution of $Hu=0$.
  Then $u(t)\to 0$ for $\abs{t}\to \infty$.
\end{lemma}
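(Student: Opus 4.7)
The plan is to apply Lemma~\ref{lem:conv_to_0} to $u$. Since $u \in D \subseteq W_{1,\loc}^1(\R)$, $u$ is (locally absolutely) continuous, in particular measurable and right-continuous; by assumption $u \in L_p(\R,\rho)$. The task therefore reduces to verifying the modulus-of-continuity hypothesis of Lemma~\ref{lem:conv_to_0}: for every $r > 0$,
\[|u(t+r) - u(t)| \to 0 \quad (|t| \to \infty).\]

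Using the transfer matrix representation of solutions,
\[u(t+r) - u(t) = \bigl(\uN(t+r;t) - 1\bigr) u(t) + \uD(t+r;t)\,(au')(t),\]
together with the uniform bounds on the entries of $T_{a,\mu}(t+r,t)$ coming from the solution estimates in Section~\ref{sec:Solutions} (which depend only on $r$, $\norm{1/a}_\infty$, and $\norm{\mu}_\lu$), we obtain a constant $C(r) < \infty$ with
\[|u(t+r) - u(t)| \le C(r)\bigl(|u(t)| + |(au')(t)|\bigr) \quad (t \in \R).\]
Hence it suffices to prove $|u(t)| + |(au')(t)| \to 0$ as $|t| \to \infty$.

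Suppose, for a contradiction, that there exist $\delta > 0$ and a sequence $(t_n)$ with $|t_n| \to \infty$ such that $|u(t_n)| + |(au')(t_n)| \ge \delta$ for all $n$. Fix a period $L > 0$ of $\rho$, taken large enough for the arguments below. The transfer matrix bounds applied on $[t_n, t_n + L]$ (in both directions) imply that $|u(s)|^2 + |(au')(s)|^2$ remains bounded below by a positive constant, depending only on $\delta$, $L$, and the coefficient bounds, throughout $[t_n, t_n + L]$. The key step is to convert this lower bound on the joint quantity into a lower bound on $|u|$ alone on a subset of $[t_n, t_n + L]$ of $\rho$-measure bounded below uniformly in $n$; once this is available, passing to a subsequence so that the intervals $[t_n, t_n + L]$ are pairwise disjoint, the accumulated $L_p$-contributions force $\norm{u}_{L_p(\R,\rho)}^p = \infty$, contradicting $u \in L_p(\R,\rho)$.

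The main obstacle is precisely this \emph{spreading} step: transforming $|u|^2 + |(au')|^2 \gtrsim \delta^2$ into a lower bound on $|u|$ on a $\rho$-substantial subset, uniformly in $n$. A natural route is a Pr\"ufer-type polar decomposition $(u(s), (au')(s)) = r(s)(\cos\theta(s), \sin\theta(s))$: the radius $r(s)$ is uniformly bounded above and below on $[t_n, t_n + L]$ by the transfer matrix estimates, whereas the angle $\theta$ satisfies an ODE whose right-hand side is controlled by $\norm{1/a}_\infty$ and $\norm{\mu}_\lu$. Consequently $\theta$ rotates at bounded speed, so the set $\{s \in [t_n, t_n + L] : |\cos\theta(s)| < \varepsilon\}$ (on which $|u|$ is small compared to $r$) occupies only a controlled Lebesgue fraction of the interval. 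Combined with the uniform lower bound $\rho([t, t+L]) = \rho([0,L]) > 0$ coming from periodicity of $\rho$, this yields the required $\rho$-substantial subset on which $|u| \gtrsim \delta$, completing the contradiction.
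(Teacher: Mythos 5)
Your reduction is fine up to the point you yourself flag as the ``main obstacle'', but the resolution you offer for that obstacle does not work, and this is a genuine gap. You need, uniformly in $n$, a subset of $[t_n,t_n+L]$ of $\rho$-measure bounded below on which $\abs{u}\gtrsim\delta$. What your Pr\"ufer argument can deliver (after repairing the statement that $\theta$ ``satisfies an ODE'' --- for measure potentials $\theta$ jumps at atoms of $\mu$, so at best one controls its total rotation) is that the bad set $\set{s\in[t_n,t_n+L];\ \abs{\cos\theta(s)}<\eps}$ has small \emph{Lebesgue} measure. Combining this with $\rho([t_n,t_n+L])\ge\rho((0,L])>0$ is a non sequitur: $\rho$ may be singular with respect to Lebesgue measure, and all of its mass in the window may sit inside the Lebesgue-small bad set. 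This is not a pathological worry but exactly the case the paper is built to cover: for Jacobi operators $\rho=\delta_\Z=\sum_n\delta_n$, and nothing in your angle argument ties the locations where $\abs{u}$ is small to $\spt\rho$. Ruling that out would require using the structural hypothesis $\spt\mu\subseteq\spt\rho$ (e.g.\ that $au'$ is constant and $u$ monotone on gaps of $\spt\rho$), which your proof never invokes. A secondary remark: once you decide to prove the stronger claim $\abs{u(t)}+\abs{(au')(t)}\to0$, the appeal to Lemma \ref{lem:conv_to_0} becomes superfluous, so your proof is really a different (and harder) argument, not a verification of that lemma's hypothesis.

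By contrast, the paper obtains the statement directly from Lemma \ref{lem:conv_to_0}, whose proof works with $\rho$ itself: Markov's inequality in $L_p(\R,\rho)$ plus periodicity of $\rho$ produce a point $r$ in a fixed period window with $u(t_n+r)\to0$ along a subsequence, and the translate-difference hypothesis then contradicts $u(t_n)\ge\delta$. That route never needs any Lebesgue-to-$\rho$ transfer of measure estimates, which is precisely where your version breaks down. If you want to keep your contradiction scheme, the missing ingredient is a quantitative statement of the form ``if $\abs{u}^2+\abs{au'}^2\ge c$ on $[t_n,t_n+L]$, then $\abs{u}\ge c'$ on a set of $\rho$-measure $\ge c''$'', and proving that uniformly in $n$ for general periodic $\rho\in\M$ with $\spt\mu\subseteq\spt\rho$ is the real content you have not supplied.
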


\begin{proof}
  This is a direct consequence of Lemma \ref{lem:conv_to_0}.  
\end{proof}

Lemma \ref{lem:tends_to_zero} states that eigenfunctions $u\in L_p(\R,\rho)$ of $H$ have to tend to $0$ at $\pm\infty$.

The next lemma establishes a control of the derivative of solutions by means of the solution itself.

\begin{lemma}
\label{lem:estimate_derivative}
  Let $(a,\mu)\in \MR$, $u$ be a real solution of $Hu=0$, $r>0$.
  Then there exists $C>0$ such that for all intervals $I=(\alpha,\beta]\subseteq \R$ with $\beta-\alpha=r$ we have
  \[\norm{(au')|_I}_\infty \leq C \norm{u|_I}_\infty.\]
  We can set
  \[C = C_{r,a,\mu} := \max\set{\frac{2\norm{a}_\infty}{r}, \frac{\norm{\mu}_\unif}{2}} + \lceil r \rceil \norm{\mu}_\unif .\]
\end{lemma}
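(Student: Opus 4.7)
The plan is to set $g := au'$, $M := \norm{u|_I}_\infty$, and exploit that $u$ solving $Hu = 0$ forces $A_{a,\mu} u$ to be constant on $\R$. Unwinding the definition of $A_{a,\mu}u$ gives the cocycle identity
\[g(t) - g(s) = \int_s^t u\,d\mu \qquad (s,t \in \R),\]
hence the Lipschitz-type bound $\abs{g(t) - g(s)} \leq M \abs{\mu}(I) \leq \lceil r \rceil \norm{\mu}_\unif \cdot M$ for $s,t \in I$ (covering $I$ by $\lceil r \rceil$ unit intervals). It therefore suffices to find one point $t_0 \in I$ with $\abs{g(t_0)} \leq K \cdot M$ for $K := \max\set{2\norm{a}_\infty/r,\, \norm{\mu}_\unif/2}$; combining with the Lipschitz bound then yields $\norm{g|_I}_\infty \leq (K + \lceil r \rceil \norm{\mu}_\unif)M = CM$.

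To locate such a $t_0$, I split on whether $g$ changes sign on $I$. If $g$ has constant sign, say $g \geq 0$, then $u' = g/a \geq 0$ a.e.\ makes $u$ non-decreasing, so $u(\beta)-u(\alpha)\leq 2M$; using $a \leq \norm{a}_\infty$ one obtains
\[\int_\alpha^\beta g \leq \norm{a}_\infty \int_\alpha^\beta \frac{g}{a} = \norm{a}_\infty\bigl(u(\beta) - u(\alpha)\bigr) \leq 2\norm{a}_\infty M,\]
so a pigeonhole argument produces $t_0 \in I$ with $g(t_0) \leq 2\norm{a}_\infty M/r$. If instead there are $s_1 < s_2$ in $I$ with $g(s_1)>0>g(s_2)$, I set $t_1 := \inf\set{s \in [s_1, s_2] : g(s) \leq 0}$. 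Right-continuity of $g$ forces $g(t_1) \leq 0$, while $g > 0$ on $[s_1, t_1)$ gives $g(t_1-) \geq 0$; the jump of $g$ at $t_1$ equals $u(t_1)\mu(\set{t_1})$ by continuity of $u$, so
\[\abs{g(t_1)} + \abs{g(t_1-)} = \abs{g(t_1) - g(t_1-)} \leq M\norm{\mu}_\unif,\]
whence $\min(\abs{g(t_1)}, \abs{g(t_1-)}) \leq M\norm{\mu}_\unif/2$. I take $t_0 = t_1$ if $\abs{g(t_1)}$ is the small one, and otherwise $t_0 \in (t_1 - \eps, t_1) \cap I$ for small $\eps > 0$ (this lies in $I$ because $t_1 > s_1 > \alpha$), letting $\eps \to 0$ in the final bound.

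The main obstacle is the bookkeeping in the sign-change case, in particular the identification of the jump of $au'$ at an atom of $\mu$ as $u(t_1)\mu(\set{t_1})$ and the handling of the case where the controlled value is only attained as a one-sided limit — this depends critically on the right-continuity of $g$ and the half-open shape $I = (\alpha,\beta]$. The remaining ingredients (cocycle identity, pigeonhole step, unit-interval covering of $\abs{\mu}(I)$) are routine.
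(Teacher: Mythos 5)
Your proof is correct and is essentially the paper's argument in contrapositive form: the paper produces the same two thresholds ($\tfrac{2\norm{a}_\infty}{r}$ from integrating $\abs{au'}\le\norm{a}_\infty\abs{u'}$ over $I$ against $\abs{u(t)-u(s)}\le 2\norm{u|_I}_\infty$, and $\tfrac{\norm{\mu}_\unif}{2}$ from the jump height $(au')(t)-(au')(t\llim)=u(t)\mu(\set{t})$) by assuming $\abs{(au')(s)}>C\norm{u|_I}_\infty$ for all $s\in I$ and reaching a contradiction, and then propagates from the good point via the same identity $(au')(t)=(au')(s)+\int_s^t u\,d\mu$. One small remark: your sign-change case as written only covers the ordering $g(s_1)>0>g(s_2)$ with $s_1<s_2$; the reverse ordering reduces to it via $u\mapsto -u$, the same symmetry you already invoke in the constant-sign case.
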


\begin{proof}
  For $u=0$ the assertion is trivial. Hence, let $u\neq 0$.

  We first show that for an interval $I$ there exist $C>0$ and $s\in I$ such that $\abs{(au')(s)}\leq C \norm{u|_I}_\infty$.
  Assume this inequality does not hold. Then, for all $C>0$ we have $\abs{(au')(s)} > C \norm{u|_I}_\infty$ for all $s\in I$.
  Since $au'$ is real, and $(au')(s) - (au')(s\llim) = u(s)\mu(\set{s}) \leq \norm{\mu}_\unif \norm{u|_I}_\infty$, for $C>\tfrac{\norm{\mu}_\unif}{2}$ we obtain either
  $(au')(t) \geq C \norm{u|_I}_\infty$ for all $t\in I$ or $-(au')(t) \leq - C \norm{u|_I}_\infty$ for all $t\in I$.
  Since
  \[u(t) - u(s) = \int_s^t u'(r)\, dr \quad(s,t\in \R)\]
  and $a$ is bounded, we find
  \begin{align*}
    \norm{a}_\infty\abs{u(t)-u(s)} & \geq \norm{a}_\infty \abs{\int_s^t u'(r)\,dr} = \norm{a}_\infty \int_s^t \abs{u'(r)}\,dr\\
    & \geq \int_s^t \abs{(au')(r)}\, dr \geq \int_s^t C \norm{u|_I}_\infty \, dr = C \norm{u|_I}_\infty (t-s) \quad(s,t\in I).
  \end{align*}
  But trivially $\abs{u(t)-u(s)} \leq 2\norm{u|_I}_\infty$ for all $s,t\in I$, so we end up with a contradiction
  for all $C> C_0:=\max\set{\frac{2\norm{a}_\infty}{r}, \frac{\norm{\mu}_\unif}{2}}$.
  Thus, there exists $s\in I$ such that $\abs{(au')(s)}\leq C_0 \norm{u|_I}_\infty$.
  Now, for $t\in I$ we have
  \[(au')(t) = (au')(s) + \int_s^t u\, d\mu,\]
  hence
  \[\abs{(au')(t)} \leq C_0 \norm{u|_I}_\infty + \norm{\mu}_\unif \lceil r \rceil \norm{u|_I}_\infty \quad(t\in I). \qedhere\]
\end{proof}

We end this section by stating a first growth bound for solutions.

\begin{lemma}
\label{lem:SL_est1}
  Let $(a,\mu)\in \MR$, $u$ a solution of $H u = 0$. Then
  \[\abs{u(t)} + \abs{(au')(t)} \leq \bigl(\abs{u(0)} + \abs{(au')(0)}\bigr)e^{(\norm{\tfrac{1}{a}}_\infty+\norm{\mu}_\unif)(\abs {t}+1)} \quad(t\in\R).\]
\end{lemma}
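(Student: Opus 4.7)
The plan is to recast the eigenvalue equation as a coupled first-order integral system for the pair $(u, au')$ and then apply a measure-valued Gronwall inequality (the one promised in the appendix).

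First, I would use that $Hu = 0$ forces $A_{a,\mu}u$ to be $\rho$-a.e.\ constant with zero $\rho$-derivative, and since $A_{a,\mu}u$ is right-continuous and locally of bounded variation, it is in fact constant on all of $\R$. This gives the identity
\[
(au')(t) = (au')(0) + \int_0^t u(s)\,d\mu(s),
\]
while locally absolute continuity of $u$ with respect to the Lebesgue measure yields
\[
u(t) = u(0) + \int_0^t \tfrac{1}{a(s)}\,(au')(s)\,ds.
\]
Taking absolute values and using $\norm{1/a}_\infty$ on the second identity and the total variation $\abs{\mu}$ on the first, I would set $v(t) := \abs{u(t)} + \abs{(au')(t)}$ and, for $t \ge 0$, add the two inequalities to obtain
\[
v(t) \le v(0) + \int_{(0,t]} v(s)\,d\eta(s), \qquad \eta := \norm{\tfrac{1}{a}}_\infty \lambda + \abs{\mu},
\]
where $\lambda$ denotes Lebesgue measure. (Here I use $\abs{(au')(s)}, \abs{u(s)} \le v(s)$.)

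Next, I would invoke the measure-valued Gronwall inequality from the appendix to conclude
\[
v(t) \le v(0)\,\exp\bigl(\eta((0,t])\bigr).
\]
A routine estimate then bounds $\eta((0,t])$: the Lebesgue part gives $\norm{1/a}_\infty\,t$, while the uniform local boundedness of $\mu$ yields $\abs{\mu}((0,t]) \le \norm{\mu}_\unif \lceil t \rceil \le \norm{\mu}_\unif (t+1)$. Combining,
\[
\eta((0,t]) \le \bigl(\norm{\tfrac{1}{a}}_\infty + \norm{\mu}_\unif\bigr)(t+1),
\]
which establishes the bound for $t \ge 0$. The case $t < 0$ is handled symmetrically by integrating from $t$ to $0$ and applying the same Gronwall estimate on the interval $(t,0]$, whose $\eta$-measure is likewise bounded by $(\norm{1/a}_\infty + \norm{\mu}_\unif)(\abs{t}+1)$.

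The one subtle point is the justification of the measure Gronwall step: since $\mu$ may have atoms, the usual continuous Gronwall argument does not directly apply, and one needs the right-continuous, atom-aware version — precisely the reason the authors defer that inequality to an appendix and cite it here. Everything else reduces to rewriting the equation $Hu = 0$ as an integral system and tracking constants.
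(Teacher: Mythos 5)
Your overall strategy --- rewrite $Hu=0$ as the first-order integral system for $(u,au')$, set $v=\abs{u}+\abs{(au')}$, and apply the appendix Gronwall lemma --- is the same as the paper's, but there is a genuine gap at exactly the point you flag as ``subtle''. For $t\ge 0$ your inequality reads $v(t)\le v(0)+\int_{(0,t]}v\,d\eta$ with the right endpoint \emph{included}, because $(au')(t)=(au')(0)+\int_{(0,t]}u\,d\mu$ picks up the atom of $\mu$ at $t$. The Gronwall inequality of Lemma \ref{lem:Gronwall_cont}, however, has the hypothesis $u(t)\le\alpha(t)+\int_{[0,t)}u\,d\mu$, i.e.\ with the endpoint \emph{excluded}, and this is not a cosmetic difference: the endpoint-inclusive inequality does not imply any exponential bound in general. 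Indeed, take $\eta=c\,\delta_{t_0}$ with $c\ge 1$ and $v=v(0)$ on $[0,t_0)$, $v=M$ on $[t_0,\infty)$; then $v(t)\le v(0)+\int_{(0,t]}v\,d\eta$ holds for every $M$, so no bound of the form $v(t_0)\le v(0)e^{\eta((0,t_0])}$ can follow. Since $\norm{\mu}_\unif$ is not assumed small, atoms of $\abs{\mu}$ of mass $\ge 1$ cannot be ruled out, so the Gronwall step as you invoke it is invalid; there is no ``atom-aware version'' with the endpoint included.

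The paper's proof fixes this by an asymmetric treatment, which your ``handled symmetrically'' remark glosses over. For $t\le 0$ the natural inequality $\varphi(t)\le\varphi(0)+\int_{(t,0]}\varphi\,d\nu$ already excludes the endpoint $t$, so Lemma \ref{lem:Gronwall_cont} applies directly. For $t>0$ one instead works with $\varphi_-(s):=\abs{u(s)}+\abs{(au')(s\llim)}$, for which $(au')(s\llim)=(au')(0)+\int_{(0,s)}u\,d\mu$, so the integral runs over the open interval and the appendix lemma yields $\varphi_-(s)\le\varphi(0)e^{\nu((0,s))}$; the bound at $t$ then follows by letting $s\downarrow t$ and using right-continuity of $u$ and $au'$. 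Your argument needs this left-limit device (or an equivalent one) to be complete; the remaining steps, including $\eta((0,t])\le(\norm{\tfrac1a}_\infty+\norm{\mu}_\unif)(\abs{t}+1)$, match the paper.
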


\begin{proof}
  Writing
  \begin{align*}
    u(t) & = u(0) + \int_0^t u'(s)\,ds,\\
    (au')(t) & = (au')(0) + \int_0^t u(s)\, d\mu(s),
  \end{align*}
  we obtain for $\varphi(t):= \abs{u(t)} + \abs{(au')(t)}$ and $\nu:=\norm{\tfrac{1}{a}}_\infty\lambda + \abs{\mu}$ the inequality
  \[\varphi(t)\leq \varphi(0) + \int_{(t,0]} \varphi(s)\, d\nu(s) \quad(t\leq 0).\]
  By Gronwall's inequality (see Lemma \ref{lem:Gronwall_cont}) we infer
  \[\varphi(t)\leq \varphi(0)e^{\nu((t,0])} \quad(t\leq 0).\]
  Since $\norm{\nu}_\unif\leq \norm{\tfrac{1}{a}}_\infty + \norm{\mu}_\unif$ and $\nu((t,0]) \leq \norm{\nu}_\unif(\abs{t}+1)$, we obtain the assertion for $t\leq 0$.
  
  For $t>0$ we set
  \[\varphi_-(s):= \abs{u(s)} + \abs{(au')(s\llim)} \leq \varphi(0) + \int_{(0,s)} \varphi_-(r)\,d\nu(r).\]
  The Gronwall's inequality in Lemma \ref{lem:Gronwall_cont} yields
  \[\abs{u(s)} + \abs{(au')(s\llim)} = \varphi_-(s) \leq \varphi(0)e^{\nu((0,s))} = \bigl(\abs{u(0)} + \abs{(au')(0)}\bigr)e^{\nu((0,s))}.\]
  For $s\downarrow t$ the assertion follows, since $\nu((0,t])\leq \norm{\nu}_\unif(\abs{t}+1)$.
\end{proof}

\section{Estimates on differences of solutions}
\label{sec:differences_solutions}

First, we introduce Wasserstein-type seminorms on $\M$ which we will later use to measure distances of potentials.
\begin{definition}
For $\mu \in \M$ and a set $I \subseteq \R$ (which will usually be an interval) we define
\[
  \norm{\mu}_{I} := \sup\set{\Bigl|\int u\, d\mu\Bigr|;\;
  u\in W_{1,\loc}^1(\R),\: \spt u \subseteq I,\: \diam\spt u \leq 2,\: \norm{u'}_\infty \le 1 }.
\]
\end{definition}

For $\mu\in\M$ we define $\phi_\mu\from\R\to\C$ by
\[
  \phi_\mu(t) := \int_0^t d\mu
  = \begin{cases}
     \mu\bigl((0,t]\bigr) & \text{ if } t\ge0, \\
    -\mu\bigl((t,0]\bigr) & \text{ if } t<0.
    \end{cases}
\]

\begin{proposition}[see {\cite[Proposition 2.7, Remark 2.8 and Lemma 2.9]{SeifertVogt2014}}]
\label{prop:distance_measures}
  Let $\mu\in\M$ and $t\in\R$. Then
  \[\norm{\mu}_{[t-1,t+1]} \leq \min_{c\in\C} \int_{t-1}^{t+1} \abs{\phi_\mu(s)-c}\,ds \leq 2\norm{\mu}_{[t-1,t+1]}.\]
  Hence, there exists $c_{\mu,t}\in\C$, such that 
  \[\int_{t-1}^{t+1} \abs{\phi_\mu(s)-c_{\mu,t}}\,ds \leq 2\norm{\mu}_{[t-1,t+1]}.\]
  Moreover, $c_{\mu,0}$ can be chosen such that $\abs{c_{\mu,0}}\leq \norm{\mu}_\unif$.
  Furthermore, for $\alpha,\beta\in\Z$, $\alpha\leq -1$, $\beta\geq1$ and $k\in\Z\cap[\alpha,\beta-1]$ we have
  \[\int_k^{k+1} \abs{\varphi_\mu(s)-c_{\mu,0}}\,ds\leq 2\max\{k+1,-k\}\norm{\mu}_{[\alpha,\beta]}.\]
\end{proposition}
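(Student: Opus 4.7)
The plan is to reduce each claim to an integration-by-parts identity $\int u\,d\mu = -\int u'\phi_\mu\,ds$, combined with the zero-mean constraint forced by the fact that admissible test functions vanish at the boundary of their support.

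For the first inequality, fix an admissible $u$, i.e.\ $\spt u\subseteq I:=[t-1,t+1]$, $\diam\spt u\le 2$, $\norm{u'}_\infty\le 1$. Since $u$ is continuous and vanishes at $t\pm1$, one has $\int_I u'=0$, hence $\int u\,d\mu=-\int_I u'(s)\bigl(\phi_\mu(s)-c\bigr)\,ds$ for every $c\in\C$. Thus $\abs{\int u\,d\mu}\le\int_I\abs{\phi_\mu-c}\,ds$, and taking sup over $u$ and inf over $c$ yields $\norm{\mu}_I\le\min_c\int_I\abs{\phi_\mu-c}\,ds$. For the reverse inequality I would choose $c$ to be a median of $\phi_\mu$ on $I$ and set $g:=\sgn(\phi_\mu-c)$, freely modified on the level set $\{\phi_\mu=c\}$ so that $\int_I g=0$; this is possible precisely because $c$ is a median, and the identity $g(\phi_\mu-c)=\abs{\phi_\mu-c}$ together with $\abs{g}\le1$ is preserved. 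Then $u(s):=\int_{t-1}^s g(r)\,dr$ is an admissible test function, and integration by parts gives $\abs{\int u\,d\mu}=\int_I\abs{\phi_\mu-c}\,ds$. Setting $c_{\mu,t}:=c$ establishes both the existence claim and (in fact) equality, which is even stronger than the factor $2$ stated.

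The bound $\abs{c_{\mu,0}}\le\norm{\mu}_\unif$ follows since any median of $\phi_\mu$ on $[-1,1]$ lies in $[\inf_{[-1,1]}\phi_\mu,\sup_{[-1,1]}\phi_\mu]$, and the representations $\phi_\mu(s)=\mu((0,s])$ for $s\ge0$ resp.\ $-\mu((s,0])$ for $s<0$, together with $\abs{\mu}$ on any unit-length interval being bounded by $\norm{\mu}_\unif$, give $\abs{\phi_\mu(s)}\le\norm{\mu}_\unif$ on $[-1,1]$.

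For the final inequality, write $a_j:=\int_j^{j+1}\abs{\phi_\mu-c_{\mu,0}}\,ds$. The base case $a_0\le 2\norm{\mu}_{[\alpha,\beta]}$ is immediate from the first part applied to $I=[-1,1]$ and the trivial monotonicity $\norm{\mu}_{[-1,1]}\le\norm{\mu}_{[\alpha,\beta]}$. For $k\ge1$, I construct an admissible test function $u_k$ supported in $[k-1,k+1]\subseteq[\alpha,\beta]$ whose derivative equals $\sgn(\phi_\mu-c_{\mu,0})$ on $(k,k+1]$ and a constant $\tau_k\in[-1,1]$ on $(k-1,k]$, the constant chosen so that $\int u_k'=0$. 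Integration by parts then produces
\[
  \int_k^{k+1}\abs{\phi_\mu-c_{\mu,0}}\,ds \;=\; -\int u_k\,d\mu \;-\; \tau_k\int_{k-1}^{k}\bigl(\phi_\mu-c_{\mu,0}\bigr)\,ds,
\]
and combining this with $\abs{\int u_k\,d\mu}\le\norm{\mu}_{[\alpha,\beta]}$ and $\abs{\tau_k}\le1$ yields the one-step recurrence $a_k\le\norm{\mu}_{[\alpha,\beta]}+a_{k-1}$. Iterating gives $a_k\le(k+2)\norm{\mu}_{[\alpha,\beta]}\le 2(k+1)\norm{\mu}_{[\alpha,\beta]}$; a symmetric construction with support $[k,k+2]$ handles $k\le-1$. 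The main technical point will be invoking the Stieltjes integration-by-parts formula cleanly when $\phi_\mu$ is only right-continuous of locally bounded variation and $\mu$ carries atoms; this is standard once $u$ is continuous and vanishes at the relevant endpoints, so no actual atom contributions appear at the boundary.
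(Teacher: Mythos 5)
The paper does not prove this proposition itself (it is quoted from \cite{SeifertVogt2014}), so your argument stands on its own; its core is the same duality that the reference exploits. For real-valued $\mu$ your proof is correct and complete: the identity $\int u\,d\mu=-\int u'(s)\bigl(\phi_\mu(s)-c\bigr)\,ds$ for admissible $u$ (valid since $u$ is absolutely continuous, vanishes at the endpoints of $[t-1,t+1]$, and $\int u'=0$; the countably many discontinuities of $\phi_\mu$ are Lebesgue-null) gives the lower bound, your median/$\sgn$ construction produces an admissible extremal test function (the feasibility $\abs{A_+-A_-}\le A_0$ of the zero-mean modification on $\{\phi_\mu=c\}$ is exactly the median property), and the telescoping recurrence $a_k\le\norm{\mu}_{[\alpha,\beta]}+a_{k\mp1}$ together with $a_0,a_{-1}\le 2\norm{\mu}_{[\alpha,\beta]}$ and the inclusion checks $[k-1,k+1]\subseteq[\alpha,\beta]$ for $0\le k\le\beta-1$, $[k,k+2]\subseteq[\alpha,\beta]$ for $\alpha\le k\le-1$, yields constants $(k+2)\le2(k+1)$ and $(1-k)\le-2k$ within the stated bound. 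In the real case you even get $\min_{c}\int_{t-1}^{t+1}\abs{\phi_\mu-c}\,ds=\norm{\mu}_{[t-1,t+1]}$, sharper than the factor $2$.

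The one genuine caveat is that the statement allows $\K=\C$ (and minimizes over $c\in\C$), whereas ``median'' and $\sgn(\phi_\mu-c)$ are real-variable devices; as written, your upper bound, the recurrence step, and the claim $\abs{c_{\mu,0}}\le\norm{\mu}_\unif$ do not apply verbatim to complex $\mu$. Two standard repairs: split into real and imaginary parts (this is precisely where the stated factor $2$ naturally reappears, though the componentwise median only yields $\abs{c_{\mu,0}}\le\sqrt{2}\,\norm{\mu}_\unif$), or take $c_{\mu,t}$ to be a minimizer of $c\mapsto\int\abs{\phi_\mu-c}\,ds$ and replace $\sgn(\phi_\mu-c)$ by the unimodular phase $\overline{(\phi_\mu-c)}/\abs{\phi_\mu-c}$, with the subgradient condition at the minimizer supplying the zero-mean modification on the level set and with the observation that any minimizer lies in the closed convex hull of the essential range of $\phi_\mu$ on $[-1,1]$, which gives $\abs{c_{\mu,0}}\le\norm{\mu}_\unif$. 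Since this paper only ever applies the proposition to real measures (elements of $\MR$ have real $\mu$, and the lemma is used for $\mu-\tilde\mu$), this is a repair of the general statement rather than of anything needed downstream; you should also state explicitly whether the test functions in $\norm{\cdot}_I$ are taken real- or complex-valued, as the optimal constant ($1$ versus $2$) depends on that convention.
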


We will write $c_{\mu}:=c_{\mu,0}$.

Now, we want to estimate the difference of solutions in terms of the difference of the coefficients. 
For the diffusion coefficient we will use an $L_1$-difference, while for the potential we use the Wasserstein-type seminorm introduced above.
We will need two lemmas to describe the difference of solutions appropriately before we can state the estimate.

\begin{lemma}
\label{lem:SL-diff1}
  Let $(a,\mu), (\tilde{a},\tilde{\mu}) \in \MR$,
  $u$ and $\tilde{u}$ solutions of
  $H_{a,\mu} u = 0$ and $H_{\tilde{a},\tilde{\mu}}\tilde{u} = 0$, respectively.
  Then, for $s,t\in\R$ we have
  \begin{align*}
    \begin{pmatrix} u(t)-\tilde{u}(t)\\(au')(t) - (\tilde{a} \tilde{u}')(t)\end{pmatrix}
    & = T_{a,\mu}(t,s) \begin{pmatrix} u(s)-\tilde{u}(s)\\ (a u')(s) -  (\tilde{a}\tilde{u}')(s)\end{pmatrix} 
    + \int_s^t T_{a, \mu}(t,r)\begin{pmatrix}0 \\\tilde{u}(r)\end{pmatrix} \,d(\mu-\tilde{\mu})(r) \\
    & \quad + \int_s^t T_{a,\mu}(t,r) \begin{pmatrix} (\tilde{a}\tilde{u}')(r)\\ 0\end{pmatrix} \Bigl(\frac{1}{a(r)} - \frac{1}{\tilde{a}(r)}\Bigr)\,dr.
  \end{align*}  
\end{lemma}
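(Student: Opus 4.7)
My approach is to recognise the claim as the Duhamel (variation-of-parameters) formula for an inhomogeneous first-order linear system driven simultaneously by Lebesgue measure and by $\mu$, whose homogeneous fundamental matrix is $T_{a,\mu}(\cdot,s)$. To set this up, I would first rewrite $Hu=0$ as a first-order measure-driven system for the vector $\Phi_u:=\begin{pmatrix}u\\ au'\end{pmatrix}$: with
\[A(t):=\begin{pmatrix}0 & 1/a(t)\\ 0 & 0\end{pmatrix},\qquad B:=\begin{pmatrix}0 & 0\\ 1 & 0\end{pmatrix},\]
the identities $u'=\tfrac{1}{a}(au')$ and $d(au')=u\,d\mu$ together read $d\Phi_u = A(t)\Phi_u\,dt + B\Phi_u\,d\mu$. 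By definition of the Neumann and Dirichlet solutions, $T_{a,\mu}(\cdot,s)$ is the column-wise fundamental matrix of this system with $T_{a,\mu}(s,s)=I$.

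Next I would derive the inhomogeneous system satisfied by the difference $\Phi:=\begin{pmatrix}v\\ V\end{pmatrix}:=\begin{pmatrix}u-\tilde u\\ au'-\tilde a\tilde u'\end{pmatrix}$. The Lebesgue part yields $v'=\tfrac{1}{a}V+\bigl(\tfrac{1}{a}-\tfrac{1}{\tilde a}\bigr)(\tilde a\tilde u')$, while writing $u\,d\mu-\tilde u\,d\tilde\mu = v\,d\mu + \tilde u\,d(\mu-\tilde\mu)$ gives the measure part $dV = v\,d\mu + \tilde u\,d(\mu-\tilde\mu)$. Assembling both rows,
\[d\Phi = A(t)\Phi\,dt + B\Phi\,d\mu + \begin{pmatrix}(\tilde a\tilde u')\bigl(\tfrac{1}{a}-\tfrac{1}{\tilde a}\bigr)\\ 0\end{pmatrix}dt + \begin{pmatrix}0\\ \tilde u\end{pmatrix}d(\mu-\tilde\mu).\]
The asserted identity is then the Duhamel representation for this system. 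To make this rigorous I would define $\Psi(t)$ to be the right-hand side of the claimed formula, use the preceding lemma (the Lagrange identity $\det T_{a,\mu}\equiv 1$ and the explicit form $T_{a,\mu}(s,t)=T_{a,\mu}(t,s)^{-1}$) to verify that $\Psi(s)=\Phi(s)$ and that $\Psi$ satisfies the same inhomogeneous first-order system as $\Phi$ in the measure sense, and then invoke the uniqueness part of the Eckhardt--Teschl existence result cited above to conclude $\Psi=\Phi$.

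The step I expect to be most delicate is the Lebesgue--Stieltjes bookkeeping when differentiating $\int_s^t T_{a,\mu}(t,r)\begin{pmatrix}0\\ \tilde u(r)\end{pmatrix}d(\mu-\tilde\mu)(r)$ and its companion Lebesgue integral: one must handle atoms of $\mu$ and of $\tilde\mu$ (which need not coincide) correctly, respect the right-continuity convention used throughout the paper, and verify that the jump of $V$ at each such atom matches $v\,d\mu+\tilde u\,d(\mu-\tilde\mu)$ rather than being miscounted by the naive splitting. Once this bookkeeping is settled the rest of the argument is purely algebraic manipulation of $2\times 2$ matrices.
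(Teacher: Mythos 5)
Your proposal is correct, but it takes a genuinely different route from the paper. The paper does not set up a Duhamel formula and appeal to uniqueness; instead it verifies the identity directly by computing $\int_s^t T_{a,\mu}(r,s)^{-1}\bigl(\begin{smallmatrix}0\\\tilde u(r)\end{smallmatrix}\bigr)\,d(\mu-\tilde\mu)(r)$, whose entries are $\mp\uD(r;s)\tilde u(r)$ and $\uN(r;s)\tilde u(r)$, integrating by parts with the jump-heights formula (using that $\partial_\mu(a\uD')=\uD$, $\partial_\mu(a\uN')=\uN$ up to sign, while $\partial_{\tilde\mu}(\tilde a\tilde u')=\tilde u$), and then multiplying the resulting identity by $T_{a,\mu}(t,s)$. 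Your variation-of-parameters argument buys conceptual clarity (the formula is recognized, not discovered by computation), but the work you defer to ``verify $\Psi$ solves the same system'' is essentially the same Lebesgue--Stieltjes bookkeeping the paper performs in its integration by parts, so there is no net saving. Two remarks that close the points you flag as delicate: (i) the atom bookkeeping is in fact benign, because $u$ and $\tilde u$ are continuous (they lie in $W^1_{1,\loc}$), so the driving coefficients at atoms are unambiguous, and the inhomogeneity $\bigl(\begin{smallmatrix}0\\\tilde u\end{smallmatrix}\bigr)d(\mu-\tilde\mu)$ enters only the second component, which is exactly the direction left unchanged by the jump factors $I+\mu(\set{r})\bigl(\begin{smallmatrix}0&0\\1&0\end{smallmatrix}\bigr)$ of the transfer matrix, so using $T_{a,\mu}(t,r)$ rather than its left limit in $r$ under the $\int_{(s,t]}$ convention is consistent; (ii) for the final uniqueness step you do not need the full Eckhardt--Teschl apparatus: $\Psi-\Phi$ satisfies the homogeneous integral inequality with zero data, so the Gronwall inequality in the paper's appendix (Lemma \ref{lem:Gronwall_cont}) already forces $\Psi=\Phi$.
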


\begin{proof}
  Without loss of generality, let $s=0$. Note that $-\partial_\mu (au') = u$. Integrating by parts and using the jump heights formula, we obtain
  \begin{align*}
    & \int_0^t T_{a,\mu}(r,0)^{-1}\begin{pmatrix} 0 \\ \tilde{u}(r)\end{pmatrix}\, d(\mu-\tilde{\mu})(r)
    = \begin{pmatrix} 
	- \int_0^t \uD(r)\tilde{u}(r)\,d(\mu-\tilde{\mu})(r) \\
       \int_0^t \uN(r)\tilde{u}(r)\,d(\mu-\tilde{\mu})(r)
      \end{pmatrix} \\    
    & = \begin{pmatrix} \tilde{u}(0)\\ (\tilde{a}\tilde{u}')(0)\end{pmatrix} - T_{a,\mu}(t,0)^{-1} \!\begin{pmatrix} \tilde{u}(t) \\ (\tilde{a}\tilde{u}')(t)\end{pmatrix} 
    - \int_0^t T_{a,\mu}(r,0)^{-1} \begin{pmatrix} (\tilde{a} \tilde{u}')(r)\\ 0\end{pmatrix} \Bigl(\frac{1}{a(r)} - \frac{1}{\tilde{a}(r)}\Bigr) \,dr.
  \end{align*}
  Multiplying by $T_{a,\mu}(t,0)$ yields the assertion, since we have $T_{a,\mu}(t,0) T_{a,\mu}(r,0)^{-1} = T_{a,\mu}(t,r)$
  and
  \[T_{a,\mu}(t,0) \begin{pmatrix} \tilde{u}(0)\\ (\tilde{a}\tilde{u}')(0)\end{pmatrix} = \begin{pmatrix} u(t)\\ (au')(t)\end{pmatrix} - T_{a,\mu}(t,0)\begin{pmatrix} u(0)-\tilde{u}(0)\\ (au')(0) - (\tilde{a} \tilde{u}')(0)\end{pmatrix}. \qedhere\]
\end{proof}

\begin{lemma}
\label{lem:SL_diff_aux}
  Let $(a,\mu), (\tilde{a},\tilde{\mu}) \in \MR$, $c\in\R$,
  $u$ and $\tilde{u}$ solutions of
  $H_{a,\mu} u = 0$ and $H_{\tilde{a},\tilde{\mu}}\tilde{u} = 0$, respectively, such that
  $u(0) = \tilde{u}(0)$, $(\tilde{a}\tilde{u}')(0) = (au')(0) + cu(0)$. Then
  \[u(t)-\tilde{u}(t) = \int_0^t \frac{d}{ds} \bigl(\uD(t;s)\tilde{u}(s)\bigr) \cdot \bigl(c-\varphi_{\mu-\tilde{\mu}}(s)\bigr)\,ds + \int_0^t \uN(t;s)(\tilde{a}\tilde{u}')(s) \Bigl(\tfrac{1}{a(s)} - \tfrac{1}{\tilde{a}(s)}\Bigr)\,ds\quad(t\in\R).\]
\end{lemma}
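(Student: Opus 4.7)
\noindent The plan is to specialize Lemma~\ref{lem:SL-diff1} to $s=0$, take the top component of the vector identity, and then rewrite the resulting Stieltjes integral against $d(\mu-\tilde\mu)$ by integration by parts with $\varphi_{\mu-\tilde\mu}$ as antiderivative. Concretely, since the top row of $T_{a,\mu}(t,r)$ is $(\uN(t;r),\uD(t;r))$, only $\uD(t;r)$ pairs with $\tilde u(r)$ and only $\uN(t;r)$ pairs with $(\tilde a\tilde u')(r)$ in the two correction integrals, and the hypotheses $u(0)=\tilde u(0)$ and $(\tilde a\tilde u')(0)=(au')(0)+cu(0)$ reduce the transfer-matrix boundary term to $-cu(0)\uD(t;0)$. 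Lemma~\ref{lem:SL-diff1} then gives
\begin{align*}
u(t)-\tilde u(t) &= -cu(0)\uD(t;0) + \int_0^t \uD(t;r)\tilde u(r)\,d(\mu-\tilde\mu)(r) \\
&\quad + \int_0^t \uN(t;r)(\tilde a\tilde u')(r)\Bigl(\tfrac{1}{a(r)}-\tfrac{1}{\tilde a(r)}\Bigr)\,dr.
\end{align*}

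Next I would verify that $s\mapsto \uD(t;s)\tilde u(s)$ is locally absolutely continuous. Using the explicit inverse of the transfer matrix, the solution with Cauchy data $(0,1)$ at $s$ can be read off from the base point $0$ via
\[\uD(t;s)=\uN(s;0)\uD(t;0)-\uD(s;0)\uN(t;0),\]
exhibiting $\uD(t;\cdot)$ as a linear combination of solutions of the $(a,\mu)$ system and thus as an element of $W_{1,\loc}^1(\R)$. Since $\tilde u\in W_{1,\loc}^1(\R)$ as well, the product $\uD(t;\cdot)\tilde u(\cdot)$ lies in $W_{1,\loc}^1(\R)$, with classical derivative $\tfrac{d}{ds}(\uD(t;s)\tilde u(s))$ defined for a.e.\ $s$.

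Integration by parts against the right-continuous, locally BV function $\varphi_{\mu-\tilde\mu}$ then yields
\[\int_0^t \uD(t;r)\tilde u(r)\,d(\mu-\tilde\mu)(r) = -\int_0^t \varphi_{\mu-\tilde\mu}(s)\tfrac{d}{ds}\bigl(\uD(t;s)\tilde u(s)\bigr)\,ds,\]
the boundary term $\bigl[\uD(t;s)\tilde u(s)\varphi_{\mu-\tilde\mu}(s)\bigr]_0^t$ vanishing because $\uD(t;t)=0$ and $\varphi_{\mu-\tilde\mu}(0)=0$. Writing $-cu(0)\uD(t;0)=c\bigl[\uD(t;s)\tilde u(s)\bigr]_0^t=c\int_0^t \tfrac{d}{ds}(\uD(t;s)\tilde u(s))\,ds$ and adding, the first two summands of the interim identity combine into $\int_0^t\bigl(c-\varphi_{\mu-\tilde\mu}(s)\bigr)\tfrac{d}{ds}(\uD(t;s)\tilde u(s))\,ds$, and together with the unchanged $\tfrac{1}{a}-\tfrac{1}{\tilde a}$ integral this is the claim.

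The main obstacle is justifying the integration-by-parts step cleanly despite $\mu-\tilde\mu$ possibly carrying atoms. This is handled by the continuity of $\uD(t;\cdot)\tilde u(\cdot)$ noted above: against an AC (hence continuous) function the Stieltjes integral against $d\varphi_{\mu-\tilde\mu}$ obeys the standard integration-by-parts formula with no jump-correction terms, and the endpoint evaluations respect the half-open convention $\int_0^t\cdots d(\mu-\tilde\mu)=\int_{(0,t]}\cdots$ fixed in Section~\ref{sec:Sturm-Liouville_operators}.
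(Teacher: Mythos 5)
Your proposal is correct and follows essentially the same route as the paper: both start from Lemma \ref{lem:SL-diff1} with $s=0$, reduce the boundary term to $-c\,\uD(t;0)u(0)$, and then rearrange the $d(\mu-\tilde\mu)$ integral so that $\varphi_{\mu-\tilde\mu}$ pairs with $\tfrac{d}{ds}\bigl(\uD(t;s)\tilde u(s)\bigr)$. The only cosmetic difference is that the paper writes $\uD(t;r)\tilde u(r)=-\int_r^t\tfrac{d}{ds}(\uD(t;s)\tilde u(s))\,ds$ and applies Fubini, whereas you perform the equivalent Stieltjes integration by parts directly (and you additionally make explicit the local absolute continuity of $s\mapsto\uD(t;s)\tilde u(s)$, which the paper uses implicitly).
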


\begin{proof}
  Let $t\in\R$. By Lemma \ref{lem:SL-diff1} we obtain
  \begin{align*}
    u(t)-\tilde{u}(t) & = -c\uD(t;0)u(0) + \int_0^t \uD(t;r)\tilde{u}(r)\,d(\mu-\tilde{\mu})(r) + \int_0^t \uN(t;s)(\tilde{a}\tilde{u}')(s) \Bigl(\tfrac{1}{a(s)} - \tfrac{1}{\tilde{a}(s)}\Bigr)\,ds.
  \end{align*}
  Since $\uD(t;t) = 0$, we have
  \[\uD(t;r)\tilde{u}(r) = -\int_r^t \frac{d}{ds} \bigl(\uD(t;s)\tilde{u}(s)\bigr)\,ds.\]
  Thus, by Fubini's theorem, we obtain
  \begin{align*}
    u(t)-\tilde{u}(t) & = -c\uD(t;0)u(0) - \int_0^t \int_0^s \,d(\mu-\tilde{\mu})(r) \frac{d}{ds} \bigl(\uD(t;s)\tilde{u}(s)\bigr)\,ds \\
    & \quad + \int_0^t \uN(t;s)(\tilde{a}\tilde{u}')(s) \Bigl(\tfrac{1}{a(s)} - \tfrac{1}{\tilde{a}(s)}\Bigr)\,ds \\
    & = \int_0^t \bigl(c-\varphi_{\mu-\tilde{\mu}}(s)\bigr) \frac{d}{ds} \bigl(\uD(t;s)\tilde{u}(s)\bigr)\,ds  + \int_0^t \uN(t;s)(\tilde{a}\tilde{u}')(s) \Bigl(\tfrac{1}{a(s)} - \tfrac{1}{\tilde{a}(s)}\Bigr)\,ds. \qedhere
  \end{align*}
\end{proof}

We can now state the estimate of differences of solutions in terms of the differences of the coefficients.

\begin{lemma}
\label{lem:SL_diff2}
  Let $(a,\mu), (\tilde{a},\tilde{\mu}) \in \MR$, $u$ and $\tilde{u}$ solutions of
  $H_{a,\mu} u = 0$ and $H_{\tilde{a},\tilde{\mu}}\tilde{u} = 0$, respectively, satisfying
  \[\begin{pmatrix}
      u(0)\\
      (au')(0)
    \end{pmatrix} 
    =
    \begin{pmatrix}
      \tilde{u}(0)\\
      (\tilde{a}\tilde{u}')(0)
    \end{pmatrix}
    - 
    \begin{pmatrix}
      0\\
      c_{\mu-\tilde{\mu}} \tilde{u}(0)
    \end{pmatrix}.\]    
  Let $\alpha,\beta\in\Z$, $\alpha\leq -1$, $\beta\geq 1$. Let $c,\omega>0$ such that
  \[\abs{\uN(t;s)}, \abs{\bigl(a\uD'(\cdot;s)\bigr)(t)} \leq ce^{\omega\abs{t-s}} \quad(s,t\in\R).\]
  Then there exists a constant $C>0$ depending only on $\omega$ and $\norm{\frac{1}{a}}_\infty, \norm{\mu}_\unif$, $\norm{\frac{1}{\tilde{a}}}_\infty, \norm{\tilde{\mu}}_\unif$ such that
  \[\abs{u(t) - \tilde{u}(t)} \leq Cce^{\omega \abs{t}} 
  \norm{\tilde{u}|_{[\alpha,\beta]}}_\infty \bigl(\int_{\alpha}^\beta \abs{a(s) - \tilde{a}(s)}\, ds + \norm{\mu-\tilde{\mu}}_{[\alpha,\beta]}\bigr) \quad(t\in[\alpha,\beta]).\]
\end{lemma}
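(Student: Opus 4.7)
I would begin from the identity of Lemma~\ref{lem:SL_diff_aux} applied with the specific constant $c=c_{\mu-\tilde{\mu}}$, which is precisely the one forced by the hypothesized relation between the initial data. This decomposes $u(t)-\tilde{u}(t)=J_1(t)+J_2(t)$, where $J_1$ carries the Wasserstein-type factor $c_{\mu-\tilde\mu}-\varphi_{\mu-\tilde\mu}(s)$ and $J_2$ carries the $L_1$-type factor $\tfrac{1}{a(s)}-\tfrac{1}{\tilde a(s)}$. The plan is to estimate each integrand pointwise by something of the form $Kce^{\omega|t-s|}\|\tilde u|_{[\alpha,\beta]}\|_\infty$, and then to integrate against the respective data using Proposition~\ref{prop:distance_measures} for $J_1$ and a direct $L_1$-bound for $J_2$.

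For $J_2(t)$ the pointwise bound is almost immediate: the hypothesis gives $|\uN(t;s)|\le ce^{\omega|t-s|}$; Lemma~\ref{lem:estimate_derivative} applied to $\tilde u$ on any unit subinterval of $[\alpha,\beta]$ bounds $|(\tilde a\tilde u')(s)|$ by a constant times $\|\tilde u|_{[\alpha,\beta]}\|_\infty$; and $|\tfrac1{a(s)}-\tfrac1{\tilde a(s)}|\le\|\tfrac1a\|_\infty\|\tfrac1{\tilde a}\|_\infty|a(s)-\tilde a(s)|$. Since $s\in[0,t]\subseteq[\alpha,\beta]$ implies $|t-s|\le|t|$, integrating delivers the $\int_\alpha^\beta|a-\tilde a|\,ds$ contribution of the claim with prefactor $Cce^{\omega|t|}\|\tilde u|_{[\alpha,\beta]}\|_\infty$.

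For $J_1(t)$ the real work is to bound $\frac{d}{ds}\bigl(\uD(t;s)\tilde u(s)\bigr)$ pointwise. I expand by the product rule into $\uD(t;s)\tilde u'(s)+(\partial_s\uD(t;s))\tilde u(s)$. The factor $|\tilde u'(s)|=|(\tilde a\tilde u')(s)|/\tilde a(s)$ is again handled via Lemma~\ref{lem:estimate_derivative}. For $|\uD(t;s)|$ I simply integrate the hypothesis bound on $a\uD'$: $\uD(t;s)=\int_s^t (a\uD'(\cdot;s))(r)/a(r)\,dr$ yields $|\uD(t;s)|\le\tfrac{c\|1/a\|_\infty}{\omega}e^{\omega|t-s|}$. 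For $\partial_s\uD(t;s)$ I exploit the antisymmetry $\uD(t;s)=-\uD(s;t)$ read off from the inverse transfer matrix formula: then $\partial_s\uD(t;s)=-\uD'(s;t)=-(a\uD'(\cdot;t))(s)/a(s)$, and that same formula identifies $(a\uD'(\cdot;t))(s)=\uN(t;s)$, which is bounded by $ce^{\omega|t-s|}$ by hypothesis. Together this produces $|\tfrac{d}{ds}(\uD(t;s)\tilde u(s))|\le Kce^{\omega|t-s|}\|\tilde u|_{[\alpha,\beta]}\|_\infty$ with $K$ depending only on $\|\tfrac1a\|_\infty,\|\mu\|_\unif,\|\tfrac1{\tilde a}\|_\infty,\|\tilde\mu\|_\unif$ and $\omega$.

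Finally I split $[0,t]$ (respectively $[t,0]$) into unit intervals $[k,k+1]$ with $k\in\Z\cap[\alpha,\beta-1]$ and apply Proposition~\ref{prop:distance_measures} on each to bound $\int_k^{k+1}|c_{\mu-\tilde\mu}-\varphi_{\mu-\tilde\mu}(s)|\,ds\le 2\max\{k+1,-k\}\|\mu-\tilde\mu\|_{[\alpha,\beta]}$. Using $e^{\omega|t-s|}\le e^{\omega|t|}e^{-\omega(|k|-1)}$ on the respective interval, the linear-in-$k$ factor is absorbed by the convergent series $\sum_{k\in\Z}(|k|+1)e^{-\omega|k|}$, yielding the $\|\mu-\tilde\mu\|_{[\alpha,\beta]}$ contribution with prefactor $C'ce^{\omega|t|}\|\tilde u|_{[\alpha,\beta]}\|_\infty$. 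The hardest step is the pointwise bound on $\partial_s\uD(t;s)$: the hypothesis supplies information only on $\uN$ and $a\uD'$, and the antisymmetry trick via the inverse transfer matrix formula is what converts that information into control of the $s$-derivative of a Dirichlet solution.
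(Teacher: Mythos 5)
Your proof is correct and takes essentially the same route as the paper's: Lemma \ref{lem:SL_diff_aux} with $c=c_{\mu-\tilde{\mu}}$ (forced by the initial data), the pointwise bounds on $\uD(t;s)$, on $\tilde{a}\tilde{u}'$ via Lemma \ref{lem:estimate_derivative}, and on $\partial_s\uD(t;s)$ via the antisymmetry $\uD(t;s)=-\uD(s;t)$, followed by Proposition \ref{prop:distance_measures} on unit intervals summed against the exponential weight. The only cosmetic difference is that you identify $\bigl(a\uD'(\cdot;t)\bigr)(s)=\uN(t;s)$ through the inverse transfer matrix, whereas the paper simply uses the hypothesis bound on $a\uD'$ with the roles of $s$ and $t$ exchanged; both give the same estimate.
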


\begin{proof}
  From $\uD(s;s) = 0$ and the assumed bound $\abs{(a\uD')(\cdot;s)(t)} \leq ce^{\omega\abs{t-s}}$ for all $s,t\in\R$ we obtain
  \[\abs{\uD(t;s)} = \abs{\int_s^t \uD'(r;s)\,dr} = \abs{\int_s^t \frac{1}{a(r)} \bigl(a\uD'(\cdot;s)\bigr)(r)\,dr} \leq \norm{\tfrac{1}{a}}_\infty \frac{c}{\omega} e^{\omega\abs{t-s}} \quad (s,t\in\R).\]
  By Lemma \ref{lem:estimate_derivative} (with $r=1$) we have
  \[\norm{\bigl(\tilde{a}\tilde{u}'(\cdot;s)\bigr)|_{[\alpha,\beta]}}_\infty \leq \Bigl(\max\set{2\norm{\tilde{\alpha}}_\infty, \frac{\norm{\tilde{\mu}}_\unif}{2}} + \norm{\tilde{\mu}}_\unif\Bigr)\norm{\tilde{u}|_{[\alpha,\beta]}}_\infty.\]
  Since $\uD(t;s) = -\uD(s;t)$, we obtain
  \begin{align*}
    & \abs{\frac{d}{ds} \bigl(\uD(t;s)\tilde{u}(s)\bigr)} = \abs{-\uD'(s;t)\tilde{u}(s) + \uD(t;s)\tilde{u}'(s)}\\
    & \leq \norm{\tfrac{1}{a}}_\infty ce^{\omega\abs{t-s}} \norm{\tilde{u}|_{[\alpha,\beta]}}_\infty + \norm{\tfrac{1}{a}}_\infty \frac{c}{\omega} e^{\omega\abs{t-s}} \norm{\tfrac{1}{\tilde{a}}}_\infty\bigl(\max\set{2\norm{\tilde{\alpha}}_\infty, \tfrac{\norm{\tilde{\mu}}_\unif}{2}} + \norm{\tilde{\mu}}_\unif\bigr)\norm{\tilde{u}|_{[\alpha,\beta]}}_\infty\\
    & = C_0 c \norm{\tilde{u}|_{[\alpha,\beta]}}_\infty e^{\omega\abs{t-s}} \quad(s,t\in [\alpha,\beta]),
  \end{align*}
  for some $C_0\geq 0$.
  
  Let $t\in [0,\beta]$. By Lemma \ref{lem:SL_diff_aux} and Proposition \ref{prop:distance_measures} we have
  \begin{align*}
    \abs{u(t)-\tilde{u}(t)} & \leq \int_0^t \abs{\frac{d}{ds} \bigl(\uD(t;s)\tilde{u}(s)\bigr)} \abs{c_{\mu-\tilde{\mu}}-\varphi_{\mu-\tilde{\mu}}(s)}\,ds + \int_0^t \abs{\uN(t;s)}\abs{(\tilde{a}\tilde{u}')(s)} \abs{\tfrac{1}{a(s)} - \tfrac{1}{\tilde{a}(s)}}\,ds \\
    & \leq C_0 c \norm{\tilde{u}|_{[\alpha,\beta]}}_\infty \sum_{k=1}^\beta \int_{k-1}^k e^{\omega(t-s)} \abs{c_{\mu-\tilde{\mu}}-\varphi_{\mu-\tilde{\mu}}(s)}\,ds \\
    & \quad + ce^{\omega t} \Bigl(\max\set{2\norm{\tilde{\alpha}}_\infty, \frac{\norm{\tilde{\mu}}_\unif}{2}} + \norm{\tilde{\mu}}_\unif\Bigr)\norm{\tilde{u}|_{[\alpha,\beta]}}_\infty \norm{\tfrac{1}{a}}_\infty \norm{\tfrac{1}{\tilde{a}}}_\infty\int_0^t \abs{\tilde{a}(s) - a(s)}\,ds\\
    & \leq C_1 c e^{\omega t}\norm{\tilde{u}|_{[\alpha,\beta]}}_\infty \Bigl(\norm{a-\tilde{a}}_{L_1(\alpha,\beta)} + \norm{\mu-\tilde{\mu}}_{[\alpha,\beta]}\Bigr),
  \end{align*}
  for some $C_1\geq 0$. The proof in the case $t\in[\alpha,0)$ is analogous.
\end{proof}

By making use of this estimate we can now improve the growth bound of solutions obtained in Lemma \ref{lem:SL_est1}.

\begin{lemma}
\label{lem:SL_est2}
  Let $(a,\mu)\in \MR$, $u$ a solution of
  $H_{a,\mu} u = 0$, $\omega:= \bigl(\norm{\mu}_\unif\norm{\frac{1}{a}}_\infty^{-1}\bigr)^{1/2}$.
  Then
  \[\bigl(\omega^2 \abs{u(t)}^2 + \abs{(au')(t)}^2\bigr)^{1/2} \leq \bigl(\omega^2 \abs{u(0)}^2 + \abs{(au')(0)}^2\bigr)^{1/2} e^{\omega \norm{\tfrac{1}{a}}_\infty (\abs{t}+1/2)} \quad(t\in\R).\]
\end{lemma}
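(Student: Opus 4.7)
The plan is to work with the weighted quadratic form $\varphi(t)^2 := \omega^2 \abs{u(t)}^2 + \abs{(au')(t)}^2$, whose definition is tuned so that the AM-GM-type inequality $2\abs{u(r)}\abs{(au')(r)}\le \varphi(r)^2/\omega$ holds pointwise (simply $2\abs{\omega u}\abs{au'}\le(\omega u)^2+(au')^2$). Without loss of generality $u$ is real (splitting into real and imaginary parts) and $\varphi(0)>0$ (otherwise $u\equiv 0$); it suffices to treat $t\ge 0$, as the argument for $t<0$ is symmetric.

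\textbf{Step 1: absolutely continuous $\mu$.}
If $\mu=q\lambda$ with $q\in L_{1,\loc}(\R)$, then $h:=\varphi^2$ is locally absolutely continuous, and differentiating using $u'=(au')/a$ and $(au')'=uq$ gives
\[
  h'(r) \;=\; 2u(r)(au')(r)\Bigl(\tfrac{\omega^2}{a(r)} + q(r)\Bigr).
\]
Combining with the AM-GM bound yields $h'(r)\le h(r)\bigl(\omega/a(r)+\abs{q(r)}/\omega\bigr)$, so Gronwall's inequality provides
\[
  h(t)\;\le\; h(0)\exp\!\Bigl(\omega\norm{\tfrac1a}_\infty t + \tfrac{1}{\omega}\abs{\mu}\!\bigl((0,t]\bigr)\Bigr).
\]
Estimating $\abs{\mu}((0,t])\le \norm{\mu}_\unif(t+1)$ and invoking the identity $\norm{\mu}_\unif/\omega=\omega\norm{\tfrac1a}_\infty$ (immediate from the definition of $\omega$) turns the exponent into $\omega\norm{\tfrac1a}_\infty(2t+1)$; the claim follows upon taking square roots.

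\textbf{Step 2: general $\mu\in\M$.}
Extend by approximation. Given a sequence $(\mu_n)$ of absolutely continuous elements of $\M$ with $\spt\mu_n\subseteq\spt\rho$, $\norm{\mu_n}_\unif\le\norm{\mu}_\unif$, and $\norm{\mu-\mu_n}_{[\alpha,\beta]}\to 0$ on each bounded interval $[\alpha,\beta]$, Lemma~\ref{lem:SL_diff2} applied to $(a,\mu_n)$ and $(a,\mu)$ provides uniform convergence on compacts of the corresponding solutions $u_n$ (with matching initial data, possibly up to a correction through $c_{\mu-\mu_n}$) and of $au_n'$. Applying Step~1 to each $u_n$ (with $\omega_n\le\omega$) and taking $n\to\infty$ yields the estimate for $u$.

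\textbf{Main obstacle.} The approximation step is the delicate one: preserving $\spt\mu_n\subseteq\spt\rho$ is incompatible with naive mollification whenever $\spt\rho$ is thin, most notably in the discrete Jacobi case $\rho=\delta_\Z$, where no absolutely continuous approximation exists at all. In that regime one must establish the bound directly, using the integration-by-parts identity $d((au')^2) = \bigl((au')(r)+(au')(r\llim)\bigr)u(r)\,d\mu(r)$ valid for the right-continuous BV function $au'$, together with $\bigl(\abs{(au')(r)}+\abs{(au')(r\llim)}\bigr)\abs{u(r)} \le (h(r)+h(r\llim))/(2\omega)$, to obtain a symmetrized measure-Gronwall inequality. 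Iterating this across atoms $t_i$ produces multiplicative factors $1+\abs{m_i}+m_i^2$ with $m_i=\mu(\{t_i\})/\omega$, and the bound $\log(1+x+x^2)\le 2x$ for $x\ge 0$ absorbs these into $2\abs{\mu_d}((0,t])/\omega$, yielding the same target exponent $\omega\norm{\tfrac1a}_\infty(2t+1)$ for $h(t)/h(0)$ and hence, after square roots, the stated estimate.
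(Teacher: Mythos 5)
Your Step 1 is exactly the paper's first step (energy function $h=\omega^2\abs{u}^2+\abs{au'}^2$, AM--GM, Gronwall), and your Step 2 is the same smoothing strategy the paper uses; but as written Step 2 has a genuine gap. Lemma \ref{lem:SL_diff2} only controls $\abs{u-u_n}$; it does \emph{not} give convergence of $au_n'$, which you assert. Worse, since the initial data are matched only up to the correction $c_{\mu-\mu_n}$ (controlled merely through Proposition \ref{prop:distance_measures}, and not known to tend to $0$ for an arbitrary approximating sequence), you cannot simply let $n\to\infty$ in the Step 1 inequality anchored at $t=0$: $(au_n')(0)\to(au')(0)$, hence $h_n(0)\to h(0)$, is exactly what is in question. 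The paper's proof fills precisely this hole: it proves $(au_n')(t)\to(au')(t)$ at points with $\mu(\set{t})=0$ via $(au_n')(t)-(au_n')(s)=\int_s^t u_n\,d\mu_n$, weak convergence, Lemma \ref{lem:estimate_derivative} and an integration in $s$, then applies the smooth-case inequality between atom-free points $0<s<t$ and finally lets $s\downarrow0$ using right-continuity. A further small slip: you should run the Gronwall estimate for $u_n$ with the \emph{fixed} weight $\omega$ (the differential inequality holds for every $\omega>0$, and $\norm{\mu_n}_\unif\le\norm{\mu}_\unif$ bounds the exponent); quoting Step 1 with the optimized $\omega_n\le\omega$ controls the wrong weighted norm on the left-hand side.

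Your ``main obstacle'' is not an obstacle: none of the estimates of Sections \ref{sec:Solutions} and \ref{sec:differences_solutions} uses $\rho$ or the condition $\spt\mu\subseteq\spt\rho$ (for the homogeneous equation $H_{a,\mu}u=0$ the measure $\rho$ is irrelevant), so one approximates $\mu$ by measures with smooth densities on all of $\R$ --- this is \cite[Proposition 2.5]{SeifertVogt2014}, which the paper invokes --- and this covers the Jacobi case without any separate argument. Moreover, your proposed direct fallback does not reach the stated constant: bounding the jump factor $1+\abs{m_i}+m_i^2$ by $e^{2\abs{m_i}}$ contributes $\tfrac{2}{\omega}\abs{\mu_d}\bigl((0,t]\bigr)$ to the exponent, so together with the $\omega\norm{\tfrac1a}_\infty t$ term you get at best $\omega\norm{\tfrac1a}_\infty(3t+2)$ for $h(t)/h(0)$, i.e.\ $\omega\norm{\tfrac1a}_\infty(\tfrac32 t+1)$ after taking square roots --- strictly weaker than the claimed $\omega\norm{\tfrac1a}_\infty(t+\tfrac12)$. (A sharp direct treatment of atoms is possible: in the coordinates $(\omega u, au')$ the jump acts by the unipotent matrix $\left(\begin{smallmatrix}1&0\\ \mu(\set{t})/\omega&1\end{smallmatrix}\right)=\exp\left(\begin{smallmatrix}0&0\\ \mu(\set{t})/\omega&0\end{smallmatrix}\right)$, whose norm is at most $e^{\abs{\mu(\set{t})}/\omega}$, recovering the $\tfrac1\omega\abs{\mu}$ contribution --- but this detour is unnecessary once the approximation step is done correctly.)
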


\begin{proof}
  Without loss of generality, let $\mu\neq 0$ (the case $\mu=0$ is trivial, as then $(au')$ is constant).
  
  (i) 
  We first assume that $\mu = q\lambda$ with a density $q\in C(\R)$. Then $au'\in C^1(\R)$ and $(au')' = qu$. Let $\phi(t):= \omega^2\abs{u(t)}^2 + \abs{(au')(t)}^2$. Then $\varphi\in W_{1,\loc}^1(\R)$, and
  \begin{align*}
  	\abs{\phi'(t)} & = \abs{2\Re\bigl(\bigl(\tfrac{\omega}{\overline{a(t)}} + \abs{q(t)}\bigr)u(t) \overline{(au')(t)}\bigr)} \leq \bigl(\tfrac{\omega}{\abs{a(t)}} + \tfrac{\abs{q(t)}}{\omega}\bigr)\varphi(t)
  \end{align*}
  for a.a.\ $t\in\R$.  
  Hence, $\varphi(t)\leq \varphi(s)\exp(\omega\norm{\frac{1}{a}}_\infty\abs{t-s} + \frac{1}{\omega}\int_s^t\rho(r)\,dr)$ and therefore
  \[\bigl(\omega^2 \abs{u(t)}^2 + \abs{(au')(t)}^2\bigr)\leq \bigl(\omega^2 \abs{u(s)}^2 + \abs{(au')(s)}^2\bigr)e^{\omega\norm{\tfrac{1}{a}}_\infty(t-s) + \frac{1}{\omega}\abs{\mu}([s,t])}\]
  for all $s,t\in\R$, $s<t$.
  
  (ii)
  By \cite[Proposition 2.5]{SeifertVogt2014} there exists $(\mu_n)$ in $\M$ such that $\mu_n$ has a smooth density and $\norm{\mu_n}_\unif\leq \norm{\mu}_\unif$ for all $n\in\N$, $\norm{\mu_n-\mu}_\R\to 0$ and $\limsup_{n\to\infty} \abs{\mu_n}(I) \leq \abs{\mu}(I)$ for all compact intervals $I\subseteq \R$.
  Then \cite[Lemma 2.4]{SeifertVogt2014} implies $\1_{[\alpha,\beta]}\mu_n\to \1_{[\alpha,\beta]}\mu$ weakly for all $\alpha,\beta\in\R$ such that $\mu(\{\alpha\}) = \mu(\{\beta\}) = 0$.
  
  (iii)
  For $n\in\N$ let $u_n$ be the solution of $H_{a,\mu_n}u_n = 0$ such that 
  $u_n(0) = u(0)$, $(au_n)'(0) = (au')(0) + c_{\mu-\mu_n}u(0)$.
  By Lemma \ref{lem:SL_est1}, $(u_n)$ is uniformly bounded on any compact interval, so Lemma \ref{lem:SL_diff2} implies $u_n\to u$ locally uniformly. Hence, for $s,t\in\R$ with $\mu(\{s\}) = \mu(\{t\}) = 0$ we obtain
  \[(au_n')(t) - (au_n')(s) = \int_s^t u_n(r) \,d\mu_n(r) \to \int_s^t u(r)\,d\mu(r) = (au')(t) - (au')(s).\]
  By Lemma \ref{lem:estimate_derivative} also $(au_n')$ is uniformly bounded on $[0,1]$, so dividing by $a(s)$ and integration with respect to $s$ yields
  \begin{align*}
    (au_n')(t)\int_0^1 \frac{1}{a(s)}\,ds  - \bigl(u_n(1) - u_n(0)\bigr) & \to (au')(t)\int_0^1 \frac{1}{a(s)}\,ds  - \bigl(u(1) - u(0)\bigr),
  \end{align*}
  so $(au_n')(t) \to (au')(t)$.
  
  (iv)
  Let $t>s>0$ such that $\mu(\{s\}) = \mu(\{t\}) = 0$. By (i) we have
  \[\bigl(\omega^2 \abs{u_n(t)}^2 + \abs{(au_n')(t)}^2\bigr)\leq \bigl(\omega^2 \abs{u_n(s)}^2 + \abs{(au_n')(s)}^2\bigr)e^{\omega\norm{\tfrac{1}{a}}_\infty(t-s) + \frac{1}{\omega}\abs{\mu_n}([s,t])}.\]
  Taking the limit $n\to\infty$ noting (ii) we obtain
  \[\bigl(\omega^2 \abs{u(t)}^2 + \abs{(au')(t)}^2\bigr)\leq \bigl(\omega^2 \abs{u(s)}^2 + \abs{(au')(s)}^2\bigr)e^{\omega\norm{\tfrac{1}{a}}_\infty(t-s) + \frac{1}{\omega}\abs{\mu}([s,t])}.\]
  
  (v)
  For $t>0$ there exist sequences $s_n \in [0,t)$ and $(t_n)$ in $[t,\infty)$ 
  such that $s_n\to 0$, $t_n\to t$ and $\mu(\{s_n\}) = \mu(\{t_n\}) = 0$ for 
  all $n\in\N$.
  Thus, from (iv) we deduce
  \[\bigl(\omega^2 \abs{u(t)}^2 + \abs{(au')(t)}^2\bigr)\leq \bigl(\omega^2 \abs{u(0)}^2 + \abs{(au')(0)}^2\bigr)e^{\omega\norm{\tfrac{1}{a}}_\infty t + \frac{1}{\omega}\abs{\mu}((0,t])}.\]
  Hence,
  \[\bigl(\omega^2 \abs{u(t)}^2 + \abs{(au')(t)}^2\bigr)\leq \bigl(\omega^2 \abs{u(0)}^2 + \abs{(au')(0)}^2\bigr)e^{\omega\norm{\tfrac{1}{a}}_\infty t + \frac{1}{\omega}\norm{\mu}_{\unif}(t+1)}.\]  
  Optimizing for $\omega>0$ yields $\omega = 
  \bigl(\norm{\mu}_\unif\norm{\frac{1}{a}}_\infty^{-1}\bigr)^{1/2}$, which implies the assertion. The case $t<0$ is proved analogously.
\end{proof}

\section{Bounds on eigenvalues}
\label{sec:Gordons_Theorem}

\begin{definition}
  Let $(a,\mu)\in\MR$, $C\geq 0$. We say that $(a,\mu)$ satisfies a \emph{weak Gordon condition} with weight $C$, provided
  there exists a sequence $(p_m)$ in $(0,\infty)$, $p_m\to \infty$, such that 
  \[\lim_{m\to\infty} e^{Cp_m} \Bigl(\norm{a-a(\cdot + p_m)}_{L_1(-p_m,p_m)} + \norm{\mu - \mu(\cdot+p_m)}_{[-p_m,p_m]}\Bigr) = 0.\]
\end{definition}

\begin{lemma}[see {\cite[Lemma 5.1]{SeifertVogt2014}}]
\label{lem:equiv_Gordon}
  Let $\mu\in\M$, $C > 0$. Assume there exists $(p_m)$ in $(0,\infty)$ with $p_m\to \infty$ such that
  \[e^{Cp_m} \norm{\mu-\mu(\cdot+p_m)}_{[-p_m,p_m]}\to 0.\]
  Then there exists $(\mu_m)$ in $\M$ such that
      $\mu_m$ is periodic with period $p_m$ ($m\in\N$), and
      \[
        e^{Cp_m} \norm{\mu-\mu_m}_{[-p_m,2p_m]} \to 0 \qquad (m\to\infty).
      \]
  Moreover, the measures $\mu_m$ can be chosen such that
  \[
    \1_{[\alpha_m,p_m-\alpha_m]} \mu_m = \1_{[\alpha_m,p_m-\alpha_m]} \mu, \quad
    \norm{\mu_m}_\lu \le \bigl(1+\tfrac{1}{2\alpha_m}\bigr)\norm{\mu}_\lu
  \]
  for all $m\in\N$, with $0 < \alpha_m \le \frac{p_m}{2}$ and $\inf_{m\in\N} \alpha_m>0$.
\end{lemma}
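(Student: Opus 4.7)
The plan is to build $\mu_m$ as the $p_m$-periodic extension of a slight modification of $\mu$ on a single fundamental domain $[0,p_m]$. I fix $\alpha_m\in(0,p_m/2]$ with $\inf_m\alpha_m>0$ (for concreteness, $\alpha_m$ can be taken constant in $m$ once $p_m$ is large enough). The modification replaces $\mu$ on two short boundary strips of width $\alpha_m$ near the period endpoints $0$ and $p_m$ by a constant-density (Lebesgue) measure whose total mass is chosen so that the primitive $\phi_{\mu_m}$ is continuous and satisfies $\phi_{\mu_m}(p_m)-\phi_{\mu_m}(0)=\phi_\mu(p_m)$. Concretely, I would define $\phi_m\from[0,p_m]\to\R$ equal to $\phi_\mu$ on $[\alpha_m,p_m-\alpha_m]$ and affine on each of the two strips $[0,\alpha_m]$, $[p_m-\alpha_m,p_m]$, with boundary values $\phi_\mu(0)=0$, $\phi_\mu(\alpha_m)$, $\phi_\mu(p_m-\alpha_m)$, $\phi_\mu(p_m)$, and set $\mu_m$ to be the Stieltjes measure $d\phi_m$ extended $p_m$-periodically. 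By construction no spurious atoms are created at $\Z p_m$, and $\1_{[\alpha_m,p_m-\alpha_m]}\mu_m=\1_{[\alpha_m,p_m-\alpha_m]}\mu$.

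The bound $\norm{\mu_m}_\lu\leq(1+\tfrac{1}{2\alpha_m})\norm{\mu}_\lu$ is then an explicit computation: the constant density on each strip is at most $|\phi_\mu(\pm\alpha_m)|/\alpha_m\leq\lceil\alpha_m\rceil\norm{\mu}_\lu/\alpha_m$, and the worst unit-length window sits partly on a strip and partly in the middle region, so that the factor $1/(2\alpha_m)$ corresponds to the excess over $\norm{\mu}_\lu$ contributed by a strip of length at most $\tfrac{1}{2}$ in the window.

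The key estimate is $\norm{\mu-\mu_m}_{[-p_m,2p_m]}\leq C\norm{\mu-\mu(\cdot+p_m)}_{[-p_m,p_m]}$ with $C$ depending only on $\alpha_m$. I prove this by case analysis on the support of a test function $u$ with $\diam\spt u\leq 2$, $\norm{u'}_\infty\leq 1$, $\spt u\subseteq[-p_m,2p_m]$. If $\spt u$ lies in the middle of one of the three periods, then either $\mu-\mu_m=0$ (central period, by construction) or $\mu-\mu_m=\mu-\mu(\cdot\pm p_m)$ (flanking periods, by periodicity of $\mu_m$); the change of variable $v(s):=u(s\pm p_m)$, which preserves the test-function class, reduces these contributions directly to $\norm{\mu-\mu(\cdot+p_m)}_{[-p_m,p_m]}$. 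The remaining case is when $\spt u$ meets a boundary strip $[kp_m-\alpha_m-1,kp_m+\alpha_m+1]$ for some $k\in\{-1,0,1,2\}$. Here I would integrate by parts, $\int u\,d(\mu-\mu_m)=-\int u'(s)(\phi_{\mu-\mu_m}(s)-c)\,ds$ for a suitable constant $c$, and use the $L_1$-characterization of the Wasserstein seminorm in Proposition~\ref{prop:distance_measures}; since $\phi_\mu-\phi_{\mu_m}$ vanishes on $[\alpha_m,p_m-\alpha_m]$ and is affine on each strip, its amplitudes on the strip reduce (via the continuity of $\phi_{\mu_m}$ across $\Z p_m$) to differences of $\phi_\mu$ of the form $\phi_\mu(p_m-\alpha_m)-\phi_\mu(p_m)+\phi_\mu(0)-\phi_\mu(-\alpha_m)$, which are precisely the quantities controlled by the $L_1$-form of $\norm{\mu-\mu(\cdot+p_m)}_{[-p_m,p_m]}$.

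The main obstacle will be exactly this boundary-strip case: arranging the affine pieces of $\phi_m$ and choosing the constant $c$ in the integration-by-parts representation so that the strip contributions are bounded by the Gordon quantity rather than by the full local mass of $\mu$ near $\Z p_m$. Once the case analysis is complete, the bound $\norm{\mu-\mu_m}_{[-p_m,2p_m]}\leq C\norm{\mu-\mu(\cdot+p_m)}_{[-p_m,p_m]}$ with $C$ uniform in $m$ multiplies to give $e^{Cp_m}\norm{\mu-\mu_m}_{[-p_m,2p_m]}\to 0$ by the hypothesis, finishing the proof.
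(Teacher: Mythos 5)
There is a genuine gap, and it lies exactly where you anticipated: your modification of $\mu$ on the boundary strips is not the right one, and the key estimate $\norm{\mu-\mu_m}_{[-p_m,2p_m]}\leq C\,\norm{\mu-\mu(\cdot+p_m)}_{[-p_m,p_m]}$ fails for it. On the strip $[p_m-\alpha_m,p_m]$ you replace $\mu$ by the constant-density measure whose primitive is the affine interpolation of $\phi_\mu$ between the strip endpoints. The resulting difference $\phi_\mu-\phi_{\mu_m}$ on the strip is the deviation of $\phi_\mu$ from an affine function, which is generically of size $\alpha_m\norm{\mu}_\unif$ and has nothing to do with the Gordon quantity. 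Concrete counterexample: let $\mu$ be $1$-periodic with a unit atom at each point $k-\alpha_m/2$, $k\in\Z$, and $p_m=m$. Then $\norm{\mu-\mu(\cdot+p_m)}_{[-p_m,p_m]}=0$ for every $m$, but your $\mu_m$ smears the atom near each seam into a uniform density, so a tent test function peaked at the atom gives $\norm{\mu-\mu_m}_{[-p_m,2p_m]}\geq c_0>0$ uniformly in $m$; hence $e^{Cp_m}\norm{\mu-\mu_m}_{[-p_m,2p_m]}\to\infty$, not $0$. The step in your last case analysis where the strip contribution is said to ``reduce to differences of $\phi_\mu$ of the form $\phi_\mu(p_m-\alpha_m)-\phi_\mu(p_m)+\phi_\mu(0)-\phi_\mu(-\alpha_m)$, which are precisely the quantities controlled by the seminorm'' is wrong twice over: the seminorm controls only the $L_1$-deviation of $\phi_{\mu-\mu(\cdot+p_m)}$ from a constant (Proposition \ref{prop:distance_measures}), never pointwise values or increments of $\phi_\mu$ itself, and in any case the strip error contains the full oscillation of $\phi_\mu$ inside the strip, not only endpoint increments.

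The construction that works (this is what the cited \cite[Lemma 5.1]{SeifertVogt2014} does; the present paper gives no proof but refers to it) interpolates on the strips between $\mu$ and its translate rather than between $\mu$ and a featureless filler: one takes a Lipschitz cutoff $\theta_m$ rising from $0$ to $1$ across a strip of length $2\alpha_m$ with $\norm{\theta_m'}_\infty\leq\frac{1}{2\alpha_m}$, and defines $\phi_{\mu_m}$ on the seam region as $(1-\theta_m)\phi_\mu+\theta_m\bigl(\phi_{\mu(\cdot\pm p_m)}+c\bigr)$ with the constant $c=c_{\mu-\mu(\cdot+p_m)}$ from Proposition \ref{prop:distance_measures}, then periodizes. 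Then on the strip $\phi_\mu-\phi_{\mu_m}=\theta_m\bigl(\phi_{\mu-\mu(\cdot\pm p_m)}-c\bigr)$, so both the measure error (whose density/variation picks up the factor $\norm{\theta_m'}_\infty\leq\frac{1}{2\alpha_m}$, which is where the bound $\norm{\mu_m}_\lu\leq(1+\frac{1}{2\alpha_m})\norm{\mu}_\lu$ really comes from) and the seminorm error are controlled by the $L_1$-form of the Gordon quantity; your treatment of the three ``middle'' regions by translation of test functions is fine and carries over unchanged. Without this change of the strip construction, the lemma's conclusion cannot be reached.
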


\begin{lemma}
\label{lem:SL_per}
  Let $p\in \Per(\rho)$, $(a,\mu)\in \MR$ be $p$-periodic. Let $z\in\C$ and $u$ a 
  solution of $Hu = zu$. Then
  \[\max\set{\norm{\begin{pmatrix} u(t)\\ (au')(t)\end{pmatrix}};\; t\in\set{-p,p,2p}} \geq \frac{1}{2}\norm{\begin{pmatrix} u(0)\\ (au')(0)\end{pmatrix}}.\]
\end{lemma}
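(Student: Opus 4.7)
The plan is the classical Cayley--Hamilton trick on the monodromy matrix, now adapted to the measure-valued Sturm--Liouville setting. First, I would rewrite $Hu=zu$ as $H_{p,\rho,a,\mu-z\rho}u=0$ (as noted just after the definition of a solution) and observe that, since $p\in\Per(\rho)$, the effective pair $(a,\mu-z\rho)$ is still $p$-periodic. By translation invariance of the transfer matrices (immediate from uniqueness of the first-order system), the single monodromy matrix
\[T := T_{a,\mu-z\rho}(p,0) = T_{a,\mu-z\rho}(2p,p) = T_{a,\mu-z\rho}(0,-p)\]
then controls the propagation across all three intervals. Writing $v(t):=(u(t),(au')(t))^\top$ and $v_0:=v(0)$, the transfer-matrix lemma gives $v(p)=Tv_0$, $v(2p)=T^2v_0$, and $v(-p)=T^{-1}v_0$, so the claim reduces to
\[\max\set{\norm{T^{-1}v_0},\norm{Tv_0},\norm{T^2v_0}}\geq \tfrac{1}{2}\norm{v_0}.\]

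Next, since $\det T = 1$ by the Lagrange-identity lemma, Cayley--Hamilton yields $T^2-(\tr T)T+I=0$, which I would use in the two equivalent forms
\[(\tr T)\,v_0 = Tv_0 + T^{-1}v_0, \qquad v_0 = (\tr T)\,Tv_0 - T^2v_0.\]
A case split on $|\tr T|$ then finishes the argument. If $|\tr T|\geq 1$, the first identity and the triangle inequality give
\[\norm{v_0}\leq |\tr T|\norm{v_0} = \norm{Tv_0+T^{-1}v_0}\leq \norm{Tv_0}+\norm{T^{-1}v_0},\]
so $\max\{\norm{v(p)},\norm{v(-p)}\}\geq \tfrac{1}{2}\norm{v_0}$. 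If $|\tr T|<1$, the second identity yields
\[\norm{v_0}\leq |\tr T|\norm{Tv_0}+\norm{T^2v_0}\leq \norm{Tv_0}+\norm{T^2v_0},\]
so $\max\{\norm{v(p)},\norm{v(2p)}\}\geq \tfrac{1}{2}\norm{v_0}$. Either way, the desired bound holds.

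There is no substantial obstacle: both ingredients (translation invariance of the transfer matrix and $\det T=1$) are already established in the preceding sections, and the rest is linear algebra. The only point worth verifying carefully is that $p$-periodicity of the coefficient pair transfers to the transfer matrix, but this is immediate because $t\mapsto v(t+p)$ and $t\mapsto v(t)$ both solve the same first-order system with periodic coefficients, so the propagators over the translated intervals $[0,p]$, $[p,2p]$, $[-p,0]$ all coincide with $T$.
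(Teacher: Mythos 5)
Your proof is correct and follows essentially the same route as the paper: reduce to $z=0$ by replacing $\mu$ with $\mu-z\rho$, use $p$-periodicity to identify the three propagators with a single monodromy matrix $T$ with $\det T=1$, and apply Cayley--Hamilton with a case split on $\abs{\tr T}$. Your case assignment (the identity $\tr(T)\,v_0=Tv_0+T^{-1}v_0$ when $\abs{\tr T}\geq 1$, and $v_0=\tr(T)\,Tv_0-T^2v_0$ when $\abs{\tr T}<1$) is the correct pairing; the paper's proof states the two cases in the opposite order, which is a harmless slip.
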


\begin{proof}
  Without loss of generality, let $z=0$ (just consider $\mu-z\rho$ instead of $\mu$).
  Note that $T:= T_{a,\mu}(p,0) = T_{a,\mu}(2p,p) = T_{a,\mu}(0,-p)$ by periodicitiy.
  The Cayley-Hamliton theorem assures that (note that $\det T=1$)
  \[T^2 -\tr(T) T + I = 0.\]
  Applying this equality to $\bigl(u(-p),(au')(-p)\bigr)$ in case $\abs{\tr(T)}\leq 1$ and to $\bigl(u(0),(au')(0)\bigr)$ in case $\abs{\tr(T)}>1$ yields the assertion.
\end{proof}

We can now state Gordon's theorem for Sturm-Liouville operators with measure-valued coefficients.

\begin{theorem}
\label{thm:SL}
  Let $\rho$ be periodic, $(a,\mu)\in\MR$ satisfy the weak Gordon condition with $C>0$ with period sequence $(p_m)$ in $\Per(\rho)$.
  Then $H$ does not have any eigenvalues with modulus less than 
  $\frac{1}{\norm{\rho}_\unif}\bigl(\norm{\tfrac{1}{a}}_\infty^{-1} C^2 - \norm{\mu}_\unif\bigr)$.
  %ohne improvement:
  %$\frac{1}{\norm{\rho}_\unif}\bigl(C-\norm{\tfrac{1}{a}}_\infty - \norm{\mu}_\unif\bigr)$.
\end{theorem}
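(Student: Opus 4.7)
Suppose for contradiction that $z\in\C$ is an eigenvalue of $H$ with $|z| < \frac{1}{\norm{\rho}_\unif}\bigl(\norm{\tfrac{1}{a}}_\infty^{-1} C^2 - \norm{\mu}_\unif\bigr)$, and let $u \in L_p(\R,\rho)\setminus\{0\}$ be a corresponding eigenfunction, so that $u$ solves $H_{a,\mu-z\rho}u = 0$. Shifting the origin if necessary, we may arrange $(u(0),(au')(0))\neq 0$. Lemma \ref{lem:tends_to_zero} yields $u(t)\to 0$ as $|t|\to\infty$. Setting $\omega := \bigl(\norm{\mu-z\rho}_\unif\,\norm{\tfrac{1}{a}}_\infty\bigr)^{1/2}$, the hypothesis on $|z|$ is equivalent to $\omega<C$, and Lemma \ref{lem:SL_est2} applied with potential $\mu-z\rho$ yields a constant $c$ such that $|\uN(t;s)|,\,|(a\uD'(\cdot;s))(t)|\leq c\,e^{\omega|t-s|}$, so this $\omega$ is admissible in Lemma \ref{lem:SL_diff2}.

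The plan is to combine the weak Gordon condition with Lemma \ref{lem:equiv_Gordon} (and its straightforward $L_1$-analogue for $a$) to produce $p_m$-periodic $(\tilde a_m,\tilde\mu_m)\in\MR$ with uniformly bounded $\norm{\tfrac{1}{\tilde a_m}}_\infty$ and $\norm{\tilde\mu_m}_\unif$, for which
\[
  e^{Cp_m}\bigl(\norm{a-\tilde a_m}_{L_1(-p_m,2p_m)} + \norm{\mu-\tilde\mu_m}_{[-p_m,2p_m]}\bigr)\longrightarrow 0.
\]
Since $p_m\in\Per(\rho)$, the potential $\tilde\mu_m - z\rho$ is also $p_m$-periodic. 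I would let $\tilde u_m$ be the solution of $H_{\tilde a_m,\tilde\mu_m-z\rho}\tilde u_m=0$ with the initial data demanded by Lemma \ref{lem:SL_diff2}, namely $\tilde u_m(0)=u(0)$ and $(\tilde a_m\tilde u_m')(0)=(au')(0)+c_{\mu-\tilde\mu_m}u(0)$. By Proposition \ref{prop:distance_measures} the scalars $c_{\mu-\tilde\mu_m}$ remain bounded, so $\kappa:=\liminf_{m\to\infty}\norm{(\tilde u_m(0),(\tilde a_m\tilde u_m')(0))}>0$.

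Apply Lemma \ref{lem:SL_per} to the $p_m$-periodic equation for $\tilde u_m$ to select $t_m\in\{-p_m,p_m,2p_m\}$ with $\norm{(\tilde u_m(t_m),(\tilde a_m\tilde u_m')(t_m))}\geq\kappa/2$. The quantitative heart of the argument is to show the full vector difference
\[
  \norm{\bigl(u(t_m)-\tilde u_m(t_m),\,(au')(t_m)-(\tilde a_m\tilde u_m')(t_m)\bigr)} \longrightarrow 0.
\]
The first coordinate is bounded by Lemma \ref{lem:SL_diff2} with $[\alpha,\beta]=[-p_m,2p_m]$, and the second by the entirely analogous estimate obtained from the second row of the vector identity in Lemma \ref{lem:SL-diff1}. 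Both bounds carry the factor $e^{\omega|t_m|}\norm{\tilde u_m|_{[-p_m,2p_m]}}_\infty$, which must be matched against the Gordon gain $e^{-Cp_m}$. The principal obstacle lies precisely here: one must fully exploit the $p_m$-periodicity of the transfer matrix $T_{\tilde a_m,\tilde\mu_m-z\rho}(p_m,0)$, so that $\norm{\tilde u_m|_{[-p_m,2p_m]}}_\infty$ is controlled in terms of a single-period growth rather than the crude bound from Lemma \ref{lem:SL_est2} on the whole interval, letting the bare condition $\omega<C$ close the argument and yielding the (non-sharp) bound stated in the theorem.

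Once the vector difference vanishes, $\norm{(u(t_m),(au')(t_m))}\geq\kappa/4$ for all sufficiently large $m$. If $|u(t_m)|\geq\kappa/(4\sqrt 2)$ this contradicts Lemma \ref{lem:tends_to_zero} directly. Otherwise $|(au')(t_m)|\geq\kappa/(4\sqrt 2)$, and Lemma \ref{lem:estimate_derivative} applied on the unit interval $(t_m-1,t_m]$ produces $s_m\in(t_m-1,t_m]$ with $|s_m|\to\infty$ and $|u(s_m)|\geq\kappa/(4\sqrt 2\,C_{1,a,\mu})$, again contradicting Lemma \ref{lem:tends_to_zero}. This contradiction proves that no such eigenvalue $z$ can exist.
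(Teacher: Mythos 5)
Your overall scaffolding (periodic approximants from Lemma \ref{lem:equiv_Gordon}, comparison via Lemma \ref{lem:SL_diff2}, the three-point Lemma \ref{lem:SL_per}, decay of the eigenfunction) matches the paper, but the step you yourself flag as ``the principal obstacle'' is a genuine gap, and the rescue you propose cannot work. Anchoring the data of $\tilde u_m$ at $0$ and applying Lemma \ref{lem:SL_diff2} on $[-p_m,2p_m]$, your error bound at $t_m\in\{-p_m,p_m,2p_m\}$ carries the factor $e^{\omega|t_m|}\,\norm{\tilde u_m|_{[-p_m,2p_m]}}_\infty$. Even if the sup factor were bounded, $|t_m|$ can be $2p_m$, so you would need $2\omega<C$, not $\omega<C$, and the stated constant is lost. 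Worse, $\norm{\tilde u_m|_{[-p_m,2p_m]}}_\infty$ is \emph{not} controllable by ``single-period growth'': periodicity gives $\det T_{\tilde a_m,\tilde\mu_m-z\rho}(p_m,0)=1$, but the monodromy matrix can have norm of order $e^{\omega_m p_m}$ (e.g.\ when $z$ lies in a spectral gap of the periodic operator, solutions genuinely grow exponentially), so the sup over three periods can be of order $e^{2\omega_m p_m}$ relative to the data at $0$; no transfer-matrix identity reduces this to an $m$-independent constant. Thus the Gordon gain $e^{-Cp_m}$ with the bare hypothesis $\omega<C$ does not close your estimate.

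The paper avoids both losses by a different arrangement. It chooses $a_m,\mu_m$ to coincide with $a,\mu$ on $[\alpha_m,p_m-\alpha_m]$ (Lemma \ref{lem:equiv_Gordon}, plus periodization of $a$ over $(0,p_m]$) and matches the data of $u_m$ with $u$ at $\alpha_m$, so $u_m\equiv u$ on the whole middle interval; Lemma \ref{lem:SL_diff2} is then applied from the \emph{left} end of that interval out to $-p_m$ and from the \emph{right} end out to $2p_m$, so differences are only propagated a distance $\approx p_m+\alpha_m$ with $\alpha_m=o(p_m)$, and the sup factor is taken to be $\norm{u}_\infty$ (finite, since the eigenfunction is continuous and tends to zero), not the possibly exponentially large $\norm{u_m}_\infty$. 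Finally, Lemma \ref{lem:SL_per} is used in the opposite direction from yours: since $u$ is small near $\pm p_m,2p_m$ and $u_m$ is uniformly close to $u$ on $[-p_m,2p_m]$, the vectors $(u_m,a_mu_m')$ at $-p_m,p_m,2p_m$ tend to zero (the derivative component via Lemma \ref{lem:estimate_derivative}), whence $(u_m(0),(a_mu_m')(0))\to0$, so $u_m\to0$ locally uniformly by Lemma \ref{lem:SL_est1}, contradicting $u_m\to u\neq0$. Your direction of Lemma \ref{lem:SL_per} (a lower bound at some $t_m$) could also be made to work, but only after the quantitative comparison is repaired as above; as written, the proposal does not establish the claimed bound.
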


\begin{proof}
  Let $(\mu_m)$ as in Lemma \ref{lem:equiv_Gordon}.
  Without loss of generality, let $p_m\geq 2$ for all $m\in\N$, and we may further assume that $p_m+\alpha_m\in\N$ for all $m\in\N$, $\alpha_m\to \infty$ and $\frac{\alpha_m}{p_m}\to 0$.
  For $m\in\N$ let $a_m$ be $p_m$-periodic with $a_m|_{(0,p_m]} = a|_{(0,p_m]}$.
   
  Assume that $z\in\R$ with
  $\abs{z}< \frac{1}{\norm{\rho}_\unif}\bigl(\norm{\tfrac{1}{a}}_\infty^{-1} C^2 - \norm{\mu}_\unif\bigr)$
  %ohne improvement
  %$\abs{z}< \frac{1}{\norm{\rho}_\unif}\bigl(C-\norm{\sigma}_\unif - \norm{\mu}_\unif\bigr)$ 
  is an eigenvalue of $H=H_{p,\rho,a,\mu}$.
  Let $u\in L_p(\R,\rho)$, $u\neq 0$ be a corresponding eigenfunction. Then $u$ is bounded, since $u$ is a solution and thus continuous,
  and tends to zero by Lemma \ref{lem:tends_to_zero}.
  For $m\in\N$ let $u_m$ be the solution of $H_{p,\rho,a_m,\mu_m} u_m = zu_m$ satisfying $u_m(\alpha_m) = u(\alpha_m)$, $(a_m u_m')(\alpha_m) = (a u')(\alpha_m)$.
  Then $u_m = u$ on $[\alpha_m,p_m-\alpha_m]$, since $a_m=a$ and $\mu_m = \mu$ on this interval. Note that $c_{\mu-\tilde{\mu},\alpha_m+1} = 0$, since $\1_{[\alpha_m,\alpha_m+2]}(\mu_m-\mu) = 0$.
  By Lemma \ref{lem:SL_diff2}, for $t\in[-p_m,\alpha_m]$ we obtain
  \[\abs{u(t) - u_m(t)} \leq C_m 
  e^{\omega_m\abs{t-(\alpha_m+1)}}\bigl(\norm{a-a_m}_{L_1(-p_m,\alpha_m+1)} + 
  \norm{\mu-\mu_m}_{[-p_m,\alpha_m+1]}\bigr)\]
  where 
  $\omega_m = \norm{\tfrac{1}{a_m}}_\infty^{1/2}\norm{\mu_m-z\rho}_\unif^{1/2}$ as in Lemma \ref{lem:SL_est2}, 
  %ohne improvement
  %$\omega_m = \norm{\tfrac{1}{a_m}}_\infty + \norm{\mu_m-z\rho}_\unif$ as in Lemma \ref{lem:estimate_derivative}, 
  and $C_m$ is only depending on $\omega_m$, $\norm{\frac{1}{a}}_\infty$, $\norm{\frac{1}{a_m}}_\infty$, $\norm{\mu}_\unif$ and $\norm{a}_\infty$, and similarly for $t\in [p_m-\alpha_m,2p_m]$.
  Hence,
  \begin{align*}
  \label{eq:SL_conv}
    \sup_{t\in[-p_m,2p_m]} \abs{u(t) - u_m(t)}
    & \leq C_m e^{\omega_m (p_m+\alpha_m+1)}\bigl(\norm{a-a_m}_{L_1(-p_m,2p_m)}
    + \norm{\mu-\mu_m}_{[-p_m,2p_m]}\bigr).
  \end{align*}
  Since $\norm{\frac{1}{a_m}}_\infty \leq \norm{\frac{1}{a}}_\infty$,
  we have
  \begin{align*}
    \omega_m & \leq \norm{\frac{1}{a_m}}_\infty^{1/2}\norm{\mu_m-z\rho}_\unif^{1/2}
    \leq \norm{\frac{1}{a}}_\infty^{1/2}\bigl(\norm{\mu_m}_\unif+\abs{z}\norm{\rho}_\unif\bigr)^{1/2} \\
    & \leq \norm{\frac{1}{a}}_\infty^{1/2}\bigl((1+\tfrac{1}{2\alpha_m})\norm{\mu}_\unif+\abs{z}\norm{\rho}_\unif\bigr)^{1/2} \\
    & \to \norm{\frac{1}{a}}_\infty^{1/2} \bigl(\norm{\mu}_\unif + \abs{z}\norm{\rho}_\unif\bigr)^{1/2} < C,
  \end{align*} 
  % ohne improvement
  %\begin{align*}
  %  \omega_m & \leq \norm{\frac{1}{a}}_\infty^{-1} + \norm{\mu_m}_\unif + \abs{z}\norm{\rho}_\unif \leq \norm{\frac{1}{a}}_\infty^{-1} + \bigl(1+\tfrac{1}{2\alpha_m}\bigr)\norm{\mu}_\unif + \abs{z}\norm{\rho}_\unif \\
  %  & \to \norm{\frac{1}{a}}_\infty  \norm{\mu}_\unif + \abs{z}\norm{\rho}_\unif< C,
  %\end{align*} 
  so for large $m$ we obtain
  \[\omega_m(p_m+\alpha_m+1) \leq Cp_m.\]
  Thus, for $\varepsilon>0$ there exists $m_0\in\N$ such that such that $\abs{u(t)-u_m(t)}\leq \varepsilon$ for all $m\geq m_0$ and $t\in[-p_m,2p_m]$.
  By Lemma \ref{lem:conv_to_0} there exists $m_1\geq m_0$ such that $\abs{u(t)}\leq \varepsilon$ for $\abs{t}\geq p_{m_1}-1=:t_1$. 
  Then $\abs{u_m}\leq 2\varepsilon$ on $[-p_m,2p_m]\setminus (-t_1,t_1)$, for all $m\geq m_1$.
  By Lemma \ref{lem:estimate_derivative} we obtain $\abs{a_mu_m'}\leq C_{1,a_m,\mu_m} 2\varepsilon$ on that set.
  Hence,
  \[\bigl(u_m(\pm p_m), (a_mu_m')(\pm p_m)\bigr),\bigl(u_m(2p_m), (a_mu_m')(2 p_m)\bigr) \to 0 \quad(m\to\infty).\]
  Lemma \ref{lem:SL_per} yields $\bigl(u_m(0), (a_mu_m')(0)\bigr)\to 0$. 
  By Lemma \ref{lem:SL_est1} we now obtain $u_m\to 0$ locally uniformly. Since 
  $u_m\to u$ locally uniformly by Lemma \ref{lem:SL_diff2}, we obtain $u=0$, a 
  contradiction.
\end{proof}

By applying Theorem \ref{thm:SL} for arbitrarily large $C>0$ we obtain absence of eigenvalues.

\begin{corollary}
\label{cor:absence_ev}
  Assume $(a,\mu)\in\MR$ satisfy the weak Gordon condition for all $C>0$ with period sequence $(p_m)$ in $\Per(\rho)$. Then $H$ does not have any eigenvalues.
\end{corollary}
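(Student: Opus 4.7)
The plan is to derive this corollary as an immediate consequence of Theorem \ref{thm:SL} by letting the weight $C$ tend to infinity. The right-hand side of the eigenvalue bound in Theorem \ref{thm:SL}, namely $\frac{1}{\norm{\rho}_\unif}(\norm{\tfrac{1}{a}}_\infty^{-1} C^2 - \norm{\mu}_\unif)$, is monotone increasing in $C$ and unbounded, so every real number can be exceeded by choosing $C$ sufficiently large. Since the coefficients $\norm{\tfrac{1}{a}}_\infty$, $\norm{\mu}_\unif$, and $\norm{\rho}_\unif$ are all finite constants depending only on $(a,\mu,\rho)$ and not on $C$, this reduction is entirely quantitative.

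Concretely, I would argue by contradiction. Suppose $z\in\R$ is an eigenvalue of $H$. Choose $C>0$ large enough that
\[
  C^2 > \norm{\tfrac{1}{a}}_\infty\bigl(\abs{z}\norm{\rho}_\unif + \norm{\mu}_\unif\bigr),
\]
which is equivalent to the strict inequality
\[
  \abs{z} < \frac{1}{\norm{\rho}_\unif}\bigl(\norm{\tfrac{1}{a}}_\infty^{-1} C^2 - \norm{\mu}_\unif\bigr).
\]
By hypothesis, $(a,\mu)$ satisfies the weak Gordon condition with weight $C$ (with some period sequence in $\Per(\rho)$), so Theorem \ref{thm:SL} applies and rules out $z$ as an eigenvalue, contradicting the assumption.

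There is no real obstacle here: the corollary is essentially a reformulation of Theorem \ref{thm:SL} in the limit $C\to\infty$. The only minor subtlety is whether the hypothesis is read as asserting a single sequence $(p_m)$ which works simultaneously for all $C>0$, or whether a different sequence $(p_m^{(C)})$ may be chosen for each $C$. The argument above is compatible with both readings, since Theorem \ref{thm:SL} is invoked once for each chosen $C$, and the quantity $e^{Cp_m}(\norm{a-a(\cdot+p_m)}_{L_1(-p_m,p_m)} + \norm{\mu-\mu(\cdot+p_m)}_{[-p_m,p_m]})$ tending to $0$ for weight $C$ automatically yields the same convergence for every $C'<C$.
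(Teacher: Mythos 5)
Your proposal is correct and matches the paper's argument: the corollary is obtained simply by applying Theorem \ref{thm:SL} with $C$ chosen large enough to exceed any prospective eigenvalue modulus, since the bound $\frac{1}{\norm{\rho}_\unif}\bigl(\norm{\tfrac{1}{a}}_\infty^{-1}C^2-\norm{\mu}_\unif\bigr)$ is unbounded in $C$. Your remark on the two readings of the hypothesis is also fine and does not affect the argument.
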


\begin{remark}
  The proof of Theorem \ref{thm:SL} actually shows that $Hu=zu$ does not have any solution in $C_0(\R)$ for $z$ with small modulus.
\end{remark}

\begin{remark}
  The obtained bound is in general not optimal, but in some sense close to optimal, which we will make precise now.
  
  \begin{enumerate}
    \item
      For $r>0$ define
      \[\norm{\mu}_{\unif,r}:=\frac{1}{r} \sup_{t\in\R} \abs{\mu}\bigl(t,t+r]).\]
      Note that $\norm{\mu}_{\unif,1} = \norm{\mu}_\unif$.
      A scaling argument yields the following:
      Let $\rho$ be periodic, $(a,\mu)\in\MR$ satisfy the weak Gordon condition with $C>0$ with period sequence $(p_m)$ in $\Per(\rho)$.
      Then $H$ does not have any eigenvalues with modulus less than 
      \[\inf_{r>0} \frac{1}{\norm{\rho}_{\unif,r}}\bigl(\norm{\tfrac{1}{a}}_\infty^{-1} C^2 - \norm{\mu}_{\unif,r}\bigr).\]
    \item
      Let $\rho$ be periodic, $(a,\mu)\in\MR$. Then the supremum of all $C>0$ such that $(a,\mu)$ satisfies the weak Gordon condition with $C_{(a,\mu)}>0$ is given by
      \[ C_{(a,\mu)}:= - \liminf_{p\to\infty} \frac{1}{p} \log \Bigl(\norm{a-a(\cdot + p)}_{L_1(-p,p)} + \norm{\mu - \mu(\cdot+p)}_{[-p,p]}\Bigr),\]
      whenever $C_{(a,\mu)}>0$.
    \item
      Let $\rho$ be periodic, $(a,\mu)\in\MR$ satisfy the weak Gordon condition with $C_{(a,\mu)}>0$ with period sequence $(p_m)$ in $\Per(\rho)$.
      Then one can show that $H$ does not have any eigenvalues with modulus less than
      \[\inf_{r>0} \frac{1}{\norm{\rho}_{\unif,r}}\bigl(\norm{\tfrac{1}{a}}_\infty^{-1} C_{(a,\mu)}^2 - \norm{\mu}_{\unif,r}\bigr).\]
    \item
      The bound given in (c) is sharp in the continuum Schr\"odinger case, see \cite[Section 6]{SeifertVogt2014}.
      The example constructed there generalizes to our situation without any difficulty.
  \end{enumerate}
\end{remark}

\begin{example}
  Typical examples for coefficients satisfying our weak Gordon condition are constructed by sums of periodic ones, where the ratio of ther periods is  
  an irrational number, which can be superexponentially fast approximated by rational numbers.
  Without loss of generality, let $1\in\Per(\rho)$.
  Let $\alpha\in(0,\infty)$ be irrational and satisfy
  \[\abs{\alpha-\frac{p_m}{q_m}} \leq B m^{-q_m} \quad(m\in\N)\]
  for some $B>0$ and a suitable sequence $(\frac{p_m}{q_m})$ in $\Q$.
  Note that the set of all such numbers $\alpha$ is a dense $G_\delta$ set.
  
  Let $(a_1,\mu_1)\in\MR$ be $1$-periodic, $(a_2,\mu_2)\in\MR$ be $\alpha$-periodic, where $a_2$ is H\"older-continuous with exponent $\beta>0$, i.e.\ there exists $c>0$, such that
  \[\abs{a_2(x) - a_2(y)} \leq c\abs{x-y}^\beta \quad(x,y\in\R),\]
  and consider $(a,\mu):=(a_1+a_2,\mu_1+\mu_2)\in\MR$.
  Then these coefficients are quasiperiodic. Let $C>0$. Then
  \begin{align*}
    e^{Cp_m} \norm{a-a(\cdot+p_m)}_{L_1(-p_m,p_m)} & = e^{Cp_m}\norm{a_2 - a_2(\cdot+p_m)}_{L_1(-p_m,p_m)} \\
    & = e^{Cp_m}\norm{a_2 - a_2(\cdot+p_m-\alpha q_m)}_{L_1(-p_m,p_m)} \leq e^{Cp_m} 2p_m c\abs{p_m-\alpha q_m}^\beta \\
    & \leq 2c e^{Cp_m} p_m q_m \abs{\alpha-\frac{p_m}{q_m}}^\beta \leq 2c e^{Cp_m} B p_mq_m m^{-q_m} \to 0.
  \end{align*}
  Furthermore, as translation of $\mu_2$ is Lipschitz continuous for the norm $\norm{\cdot}_\R$ with Lipschitz constant $3\norm{\mu_2}_\unif$, we obtain
  \begin{align*}
    e^{Cp_m}\norm{\mu - \mu(\cdot+p_m)}_{[-p_m,p_m]} & = e^{Cp_m}\norm{\mu_2 - \mu_2(\cdot+p_m)}_{[-p_m,p_m]} \\
    & = e^{Cp_m}\norm{\mu_2 - \mu_2(\cdot+p_m-\alpha q_m)}_{[-p_m,p_m]} \leq e^{Cp_m} 3\abs{p_m - \alpha q_m} \norm{\mu_2}_\unif \\
    & \leq 3 \norm{\mu_2}_\unif e^{Cp_m} q_m \abs{\alpha - \frac{p_m}{q_m}} \leq  3 \norm{\mu_2}_\unif e^{Cp_m} q_m B m^{-q_m} \to 0.
  \end{align*}
  Thus, $(a,\mu)$ satisfies a Gordon condition for all $C>0$, so Corollary \ref{cor:absence_ev} yields absence of eigenvalues for $H_{p,\rho,a,\mu}$.  
\end{example}

\appendix

\section{Gronwall inequality}

We provide a Gronwall inequality suitable for our context. We include the proof for the reader's convenience.

\begin{lemma}
\label{lem:Gronwall_cont}
  Let $\alpha\from[0,\infty)\to [0,\infty)$ be measurable, $\mu$ a nonnegative Borel measure on $[0,\infty)$ and $u\in \mathcal{L}_{1,\loc}([0,\infty),\mu)$ such that
  \[u(t) \leq \alpha(t) + \int_{[0,t)} u(s)\,d\mu(s) \quad(t\geq 0).\]
  Then
  \[u(t) \leq \alpha(t) + \int_{[0,t)} \alpha(s)\exp\bigl(\mu\bigl((s,t)\bigr)\bigr)\,d\mu(s) \quad(t\geq 0).\]
\end{lemma}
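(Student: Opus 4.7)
The plan is to prove the inequality by iterating the hypothesis and summing a geometric-type series, which is the classical approach but needs care because $\mu$ may have atoms and we must respect the strict endpoints in the integration intervals.

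First I would set $v(t) := \int_{[0,t)} u(s)\,d\mu(s)$, so that the hypothesis reads $u(t) \leq \alpha(t) + v(t)$. Substituting this back into the definition of $v$ yields $v(t) \leq \int_{[0,t)} \alpha(s)\,d\mu(s) + \int_{[0,t)} v(s)\,d\mu(s)$. Iterating this $n$ times and applying Fubini gives
\[
v(t) \;\leq\; \sum_{k=1}^{n} \int \cdots \int_{0 \leq s_k < s_{k-1} < \cdots < s_1 < t} \alpha(s_k)\,d\mu(s_k)\cdots d\mu(s_1) \;+\; R_n(t),
\]
with remainder
\[
R_n(t) = \int \cdots \int_{0 \leq s_n < s_{n-1} < \cdots < s_1 < t} u(s_n)\,d\mu(s_n)\cdots d\mu(s_1).
\]

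Next I would estimate the $k$-fold iterated integral. Pulling the outermost variable $s_k$ to the front via Fubini, the remaining $(k-1)$-fold integral is the $\mu^{\otimes(k-1)}$-measure of the ordered simplex $\{s_k < s_{k-1} < \cdots < s_1 < t\}$ inside $(s_k,t)^{k-1}$. Because all the inequalities are strict, the $(k-1)!$ simplices obtained by permuting coordinates are pairwise disjoint subsets of $(s_k,t)^{k-1}$, hence each has $\mu^{\otimes(k-1)}$-measure at most $\mu((s_k,t))^{k-1}/(k-1)!$. Summing gives
\[
v(t) \;\leq\; \int_{[0,t)} \alpha(s)\sum_{k=1}^{n}\frac{\mu((s,t))^{k-1}}{(k-1)!}\,d\mu(s) + R_n(t) \;\leq\; \int_{[0,t)} \alpha(s)\, e^{\mu((s,t))}\,d\mu(s) + R_n(t).
\]

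Finally I would send $n\to\infty$. Using $u \in \mathcal{L}_{1,\loc}([0,\infty),\mu)$, the function $s\mapsto u(s)$ is $\mu$-integrable on $[0,t)$, and by the same simplex argument $R_n(t) \leq \int_{[0,t)}\abs{u(s)}\,\frac{\mu((s,t))^{n-1}}{(n-1)!}\,d\mu(s)$, which tends to $0$ by dominated convergence (the integrand is dominated by $\abs{u(s)}e^{\mu([0,t))}$ and tends to $0$ pointwise). Combining with $u \leq \alpha + v$ yields the claimed inequality. The main subtlety — essentially the only non-routine point — is the strict-inequality bookkeeping in the simplex estimate, which matters precisely when $\mu$ has atoms and forces the exponent $\mu((s,t))$ rather than $\mu([s,t])$; everything else is standard Fubini and dominated convergence.
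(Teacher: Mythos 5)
Your proposal is correct and follows essentially the same route as the paper: iterate the inequality, bound the measure of the strictly ordered simplex by $\mu\bigl((s,t)\bigr)^{k}/k!$ via the permutation--disjointness argument (which is exactly how the open intervals, and hence atoms, are handled), and kill the remainder using local $\mu$-integrability of $u$ (the paper via a direct factorized bound, you via dominated convergence). Apart from a harmless off-by-one between the range of your partial sum and the depth of your remainder term, there is no substantive difference.
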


\begin{proof}
  (i) Iterating the inequality yields
  \[u(t)\leq \alpha(t) + \int_{[0,t)} \alpha(s) \sum_{k=0}^{n-1}\mu^{\otimes k}\bigl(A_k(s,t)\bigr) \, d\mu(s) + R_n(t) \quad(n\in\N, t\geq 0),\]
  where
  \[R_n(t) := \int_{[0,t)} u(s) \mu^{\otimes n}\bigl(A_n(s,t)\bigr) \, d\mu(s)\]
  is the remainder, $A_k(s,t) := \set{(s_1,\ldots,s_k)\in(s,t)^k;\;s_1<\ldots<s_k}$
  is an $k$-dimensional open simplex and
  $\mu^{\otimes 0}\bigl(A_0(s,t)\bigr) := 1$.
  
  (ii) Let $0\leq s<t$. We now prove
  \[\mu^{\otimes k}\bigl(A_k(s,t)\bigr) \leq \frac{\mu\bigl((s,t)\bigr)^k}{k!} \quad(k\in\N_0).\]
  Indeed, let $S_k$ be the set of all permutations of $\set{1,\ldots,k}$. For $\sigma\in S_k$ let
  \[A_{k,\sigma}(s,t) := \set{(s_1,\ldots,s_k)\in(s,t)^k;\; s_{\sigma(1)}<\ldots<s_{\sigma(k)}}.\]
  Then for $\sigma\neq \sigma'$ we obtain $A_{k\sigma}(s,t)\cap A_{k,\sigma'}(s,t) = \varnothing$. 
  Furthermore,
  \[\bigcup_{\sigma\in S_k} A_{k,\sigma}(s,t) \subseteq (s,t)^k.\]
  Hence,
  \[k! \mu^{\otimes k} \bigl(A_k(s,t)\bigr) = \sum_{\sigma\in S_k}\mu^{\otimes k} \bigl(A_k(s,t)\bigr) \leq \mu^{\otimes k}\bigl((s,t)^k\bigr) = \mu\bigl((s,t)\bigr)^k.\]
  
  (iii) By (ii), we obtain
  \[\abs{R_n(t)} \leq \frac{\mu\bigl((s,t)\bigr)^n}{n!} \int_{[0,t)} \abs{u(s)}\,d\mu(s) \quad(n\in\N, t\geq 0).\]
  Since $u$ is locally integrable with respect to $\mu$ we obtain $R_n\to 0$ pointwise.
  Thus, (i) yields
  \begin{align*}
    u(t) & \leq \alpha(t) + \int_{[0,t)} \alpha(s) \sum_{k=0}^{n-1}\frac{\mu\bigl((s,t)\bigr)^k}{k!} \, d\mu(s) + R_n(t) \\
    & \leq \alpha(t) + \int_{[0,t)} \alpha(s) \exp\bigl(\mu\bigl((s,t)\bigr)\bigr) \, d\mu(s) + R_n(t) \\
    & \to \alpha(t) + \int_{[0,t)} \alpha(s) \exp\bigl(\mu\bigl((s,t)\bigr)\bigr) \, d\mu(s). \qedhere
  \end{align*}
\end{proof}

\bibliographystyle{elsarticle-num}

\end{document}